\documentclass[12pt]{article}
\usepackage[margin=1in]{geometry}
\usepackage{amsmath,amsthm,amsfonts,amssymb,amscd,cite} 
\numberwithin{equation}{section}

\newtheorem{theorem}{Theorem}[section]
\newtheorem{lemma}[theorem]{Lemma}
\newtheorem{claim}[theorem]{Claim}

\newtheorem{conjecture}[theorem]{Conjecture}

\theoremstyle{remark}

\newcommand{\floor}[1]{\left\lfloor #1 \right\rfloor}

\title{The number of cycles of a given length in dense hamiltonian graphs: proving Hilton's conjecture}

\author{Chengli Li$^a$\footnote{Email: lichengli0130@126.com}, Leyou Xu$^b$\footnote{Email: leyouxu@m.scnu.edu.cn}, Bo Zhou$^b$\footnote{Email: zhoubo@m.scnu.edu.cn}\\ 
$^a$Department of Mathematics, East China Normal University,\\ Shanghai, 200241, P.R. China\\
$^b$School of Mathematical Sciences, South China Normal University, \\
Guangzhou 510631, P.R. China}

\date{}

\begin{document}
\maketitle

\begin{abstract}
A classical theorem of Sheehan in 1977 states that every hamiltonian graph $G$ of order $n$ satisfying
$e(G)>\left\lfloor \frac{n^2}{4}\right\rfloor+1$
contains at least two cycles of every length $\ell$, $3\le \ell\le n$. In the same paper, Sheehan recorded a conjecture of Hilton, which strengthens this conclusion by asserting that such a graph contains at least $n-\ell+2$ cycles of length $\ell$ for each $3\le \ell\le n$. We prove Hilton's conjecture for all hamiltonian graphs of order at least $440$.\\  \\
\noindent {\bf Key words:} Hilton's conjecture, hamiltonian graph, pancyclic graph, cycle
\end{abstract}

\section{Introduction}
A graph is hamiltonian if it contains a Hamilton cycle (a cycle that goes through every vertex of the graph). Hamiltonicity has been a central topic in graph theory for many decades. Since deciding whether a graph is hamiltonian is NP-complete, a major line of research has sought structural conditions that guarantee Hamiltonicity. A classical starting point is Dirac's theorem~\cite{Dirac}: every $n$-vertex graph with minimum degree at least $\frac{n}{2}$ is hamiltonian. This theorem has inspired many refinements and variants concerning degree conditions, connectivity, random graphs, decompositions, robustness and related aspects of Hamiltonicity, see, for example,~\cite{ChvatalErdos,Csaba2016,Cuckler2009,Krivelevich2011,Krivelevich2014,Posa1976} and the survey~\cite{Gould}.

Pancyclicity is a natural strengthening of Hamiltonicity. An $n$-vertex graph is called pancyclic if it contains cycles of all lengths from $3$ to $n$. Bondy's well-known ``meta-conjecture''~\cite{Bondy1975} in 1973 suggests that, apart from simple extremal exceptions, any non-trivial  sufficient conditions for Hamiltonicity should in fact force pancyclicity. This conjecture is illustrated by Bondy's theorem~\cite{Bondy}, which strengthens Dirac's theorem by showing that every $n$-vertex graph with minimum degree at least $\frac{n}{2}$ is either pancyclic or isomorphic to $K_{\frac{n}{2},\frac{n}{2}}$. Further evidence was given by Bauer and Schmeichel~\cite{BS}, building on work of Schmeichel and Hakimi~\cite{SchmeichelHakimi}, who showed that the hamiltonian conditions of Bondy~\cite{Bondy1980}, Chv\'atal~\cite{Chvatal} and Fan~\cite{Fan} also imply pancyclicity  with certain exceptional graphs.

This paper considers a counting version of this circle of questions. Instead of asking merely whether each length occurs, we ask how many cycles of a prescribed length must appear in a dense hamiltonian graph. This is a natural strengthening.  For a graph $G$, let $e(G)$ denote its number of edges. Sheehan~\cite{Sheehan} proved in 1977 that if $G$ is a hamiltonian graph of order $n$ satisfying
\begin{equation}\label{eq1}
  e(G) > \floor{\frac{n^2}{4}} + 1,
\end{equation}
then $G$ contains at least two cycles of every length $\ell$, $3\le \ell\le n$. 
 Such  graphs are the dense graphs we study in this paper.  Let $c_\ell(G)$ be the number (or multiplicity) of cycles of length $\ell$ in $G$. In the same paper, Sheehan recorded the following conjecture due to  A.J.W. Hilton, which proposes a much sharper uniform multiplicity form of pancyclicity.

\begin{conjecture}[Hilton]\label{conj:Hilton}
Let $G$ be a hamiltonian graph of order $n$ satisfying \eqref{eq1}. Then, for every integer $\ell$ with $3\le \ell\le n$, $c_\ell(G) \ge n-\ell+2$.
\end{conjecture}

The conjecture is consistent with the extremal behavior at the two ends of the cycle spectrum (the set of cycle lengths of a graph): for $\ell=n$ it asks for two Hamilton cycles, a result already proved by Sheehan~\cite{Sheehan}, while for $\ell=3$ it follows from a result of Erd\H{o}s~\cite{Er55}, where it was proved that 
 every graph of order $n$ with  $e(G)=\floor{\frac{n^2}{4}} + \ell$ with $\ell=1,2,3$ contains at least $\ell\floor{\frac{n}{2}}$ triangles. 
Despite these endpoint cases, the full conjecture has remained open since Sheehan first posed it in 1977. Our main result confirms this %long-standing 
conjecture for  sufficiently large order.

\begin{theorem}\label{thm:main440}
Let $G$ be a hamiltonian graph of order $n\ge 440$ satisfying \eqref{eq1}.
Then, for every integer $\ell$ with $3\le \ell\le n$, $c_\ell(G) \ge n-\ell+2$.
\end{theorem}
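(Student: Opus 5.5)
Fix a Hamilton cycle $C=v_0v_1\cdots v_{n-1}v_0$ of $G$. There are $m=e(G)-n>\lfloor n^2/4\rfloor+1-n$ chords, indices taken mod $n$. For each length $\ell$ we will exhibit $n-\ell+2$ distinct $\ell$-cycles, each made of one or two chords together with arcs of $C$.

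\textbf{Step 1: $2$-chord cycles, assuming many crossing pairs.} A pair of crossing chords $v_av_c$, $v_bv_d$ with $a<b<c<d$ cyclically yields two cycles: one uses the arcs $v_a\to v_b$ and $v_c\to v_d$, the other uses the arcs $v_b\to v_c$ and $v_d\to v_a$. Their lengths are
\[
(b-a)+(d-c)+2 \quad\text{and}\quad (c-b)+(a-d+n)+2 .
\]
The plan is to fix $\ell$ and count pairs whose first length equals $\ell$. This amounts to counting $4$-tuples with prescribed gap sum $(b-a)+(d-c)=\ell-2$.

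\textbf{Step 2: reduction via a Bondy-type cycle-length argument on the chord graph.} Define $A_k=\{i: v_iv_{i+k}\in E\}$ for $2\le k\le n-2$. Pigeonhole over complementary positions shows the following: if for a given $\ell$ too few cycles arise, then for many $i$ the chords $v_iv_{i+t}$ and $v_{i+1}v_{i+t+s}$ cannot both be present. Here $s,t$ are chosen so that the pair forms an $\ell$-cycle together with an arc. This is the classical argument behind Bondy's $\sum d\ge n$ pancyclicity proof, done in counting form. Each "missing pair" forbids one edge. If we collect the degree deficits over the $n$ rotations $i$ and there were fewer than $n-\ell+2$ cycles of length $\ell$, we should get $e(G)\le \lfloor n^2/4\rfloor+1$. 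Concretely, I would choose a vertex $v_i$ of large degree ($\ge n/2+$ something, by averaging) and consider the pairs $(v_{i+j},v_{i+j+\ell-2})$ along $C$. An $\ell$-cycle arises as $v_i v_{j}\,C\,v_{j+\ell-2}\ldots$. Counting the indices $j$ with $v_iv_j$ and $v_iv_{j+\ell-2}$ adjacent — or with the shifted pair $v_iv_{j+1}$, $v_{i-1}v_{j+\ell-1}$ — gives roughly $d(v_i)+d(v_{i\pm1})-n$ cycles through $v_i$. Summing over $i$ and dividing by the multiplicity (each cycle is counted at most $\ell$ times, through its vertices) yields a bound of order $(4e/n-n)\cdot n/\ell$.

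\textbf{Step 3: the stability split.} The averaging in Step 2 is strong only when $e(G)$ exceeds $n^2/4$ by a linear amount, or when $\ell$ is small, middle or large enough that the counts trivially exceed $n-\ell+2$. The main obstacle is the near-extremal regime $e(G)=\lfloor n^2/4\rfloor+O(1)$ (or $+o(n^2)$), where $G$ is close to $K_{n/2,n/2}$ plus a few edges. Here I would argue structurally:
\begin{itemize}
\item[(i)] By the stability theorem (Erd\H{o}s–Simonovits), $G$ is $o(n^2)$-close to a balanced complete bipartite graph $(X,Y)$. Using Hamiltonicity and the edge count, $|X|,|Y|$ differ by at most $1$ and there is at least one edge inside a part, say $xx'\in X$ (or two such edges, if the parts are unbalanced).
\item[(ii)] For even $\ell$, count $\ell$-cycles inside the near-complete bipartite graph directly; there are exponentially many, which is far more than $n$.
\item[(iii)] For odd $\ell$, each odd cycle must use an inside edge. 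Build cycles $x x' y_1 x_1 y_2\cdots$ alternating. Vertices of low degree must be handled separately, and the extra edge count above $n^2/4$ compensates for deleted bipartite edges. The delicate case is $\ell$ close to $n$, where the bound $n-\ell+2$ is small but the available freedom also shrinks. There I would use the Hamilton cycle $C$: rerouting $C$ around the inside edge $xx'$ and shortening by replacing segments $y\,x\,y'$ with chords gives the $n-\ell+2$ distinct starting positions directly.
\end{itemize}

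The constant $440$ would come from making the error terms in (i)–(iii) explicit, using a quantitative Füredi-type stability bound rather than an $o(n^2)$ one.
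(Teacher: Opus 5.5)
There is a genuine gap: the proposal never converts the assumption of few $\ell$-cycles into a structural condition, and without that the stability step has nothing to apply to.

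\textbf{Steps 1--2 and Step 3(i).} Steps 1--2 cannot carry any weight in the regime of the theorem. Your own Bondy-type estimate gives about $(4e(G)-n^2)/\ell$ cycles of length $\ell$, before the $O(n)$ boundary losses. At $e(G)=\lfloor n^2/4\rfloor+2$ this is $O(1)$. Even with a linear excess $e(G)=n^2/4+cn$ it is $O(cn/\ell)$, far below $n-\ell+2$ when $\ell$ is of order $n$; only a quadratic excess helps. So everything rests on Step 3, and Step 3(i) is unjustified. Erd\H{o}s--Simonovits stability requires the graph to be $F$-free for a fixed $F$ of chromatic number $3$, whereas $G$ is only assumed to have many edges. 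Such a $G$ need not be close to $K_{n/2,n/2}$. Take a clique on roughly $n/\sqrt2$ vertices, close a Hamilton cycle through the remaining vertices with a path, and delete $O(n)$ clique edges to reach exactly $\lfloor n^2/4\rfloor+2$ edges. This graph is at distance $\Theta(n^2)$ from every bipartite graph, and Step 2 gives only $O(1)$ cycles for it, so it falls through your dichotomy.

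\textbf{Further problems in Step 3.} The only possible source of structure is the contrapositive hypothesis $c_\ell(G)\le n-\ell+1$, which you never use. Stability also concerns a fixed forbidden graph, so it says nothing when $\ell$ grows with $n$. Your (iii) for $\ell$ close to $n$ (``rerouting $C$ \dots\ gives the $n-\ell+2$ starting positions directly'') is not an argument. For fixed even $\ell$, (ii) is misdirected: there is no near-bipartite regime at all, since $C_\ell$-free graphs have $o(n^2)$ edges.

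\textbf{The missing idea.} Fix a Hamilton cycle $C$. If $c_\ell(G)\le n-\ell+1$ with $\ell\le n-1$, every $\ell$-cycle has an edge off $C$. Deleting one such edge per $\ell$-cycle yields a spanning hamiltonian $C_\ell$-free subgraph $H$ with $e(H)\ge\lfloor n^2/4\rfloor-n+\ell+1$, and the problem becomes extremal.
\begin{itemize}
\item The near-$K_{m,m}$ picture you anticipate arises only if $H$ is bipartite. Excluding that case is real work: one counts $(u,v)$-paths of length $\ell-1$ in $B\supseteq H$ for each edge $uv$ inside a part, then shows these edges form two stars and counts cycles through the star centres.
\item For non-bipartite $H$, He--Hu weak pancyclicity handles all $\ell\ge 11$ at once: it forces girth $>\ell$, hence $C_4$-freeness, contradicting the edge bound.
\item $\ell=4$ follows from the $C_4$-free bound, and H\"aggkvist--Faudree--Schelp disposes of $n\le 2\ell-2$.
\item Explicit Tur\'an-type bounds cover the rest: Gy\H{o}ri--Kensell--Tompkins for $C_6$, Naor--Verstra\"ete for $C_8,C_{10}$, and Wang--Wang for $C_7,C_9$.
\end{itemize}
The constant $440=2(r+2)(r+1)(r+2k)$, with $r=3$ and $k=4$, is the Wang--Wang threshold for $C_9$. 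It does not come from explicit stability error terms, which would not yield a constant anywhere near this small.
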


In fact, we will prove a slightly stronger statement in Theorem~\ref{thmm}, which leaves only finite exceptional ranges for the lengths $6\le \ell\le 10$ and confirms Conjecture~\ref{conj:Hilton} for all other lengths.

%The proof is based on a deletion reduction. If Hilton's bound fails for some length $\ell<n$, then, after fixing a Hamilton cycle, we delete one edge outside the Hamilton cycle from each $\ell$-cycle. This produces a spanning hamiltonian $C_\ell$-free subgraph which still has many edges. The bipartite case is ruled out by a counting argument, while the non-bipartite case is handled by weak pancyclicity and extremal estimates for even and odd cycles.

We mention that this study also belongs to the problem to determine the number of substructures in a fixed class of
graphs, which received much attention, see \cite{AS,ErdosCircuits, Erd1}. For example, Erd\H{o}s \cite{Erd1} conjectured that the
maximum number of cycles of length $5$ in an $n$-vertex triangle-free graph is $\left(\frac{n}{5}\right)^5$. Through the work \cite{Ge}, it was confirmed independently in \cite{Gr,HHK}.

\section{Preliminaries}
All graphs in this paper are finite and simple. For a graph $G$, we write $V(G)$ and $E(G)$ for its vertex set and edge set, $e(G)=|E(G)|$ for its number of edges, and $|V(G)|$ for its order, usually denoted by $n$. A cycle of length $\ell$ (or an $\ell$-cycle) is denoted by $C_\ell$. We write $K_{a,b}$ for the complete bipartite graph with part sizes $a$ and $b$. For $X\subseteq V(G)$, $G[X]$ denotes the subgraph induced by $X$. For disjoint   $X, Y\subseteq V(G)$, $E_G(X,Y)$ denotes the set of edges of $G$ with one endpoint in $X$ and the other in $Y$. For an edge subset $E_1$ of the complement of $G$, $G+E_1$ denotes the graph with vertex set $V(G)$ and edge set $E(G)\cup E_1$. 
The neighborhood of a vertex $v$ in $G$ is denoted by $N_G(v)$. We use $H\subseteq G$ to mean that $H$ is a subgraph of $G$. Given a graph $F$, a graph $G$ is $F$-free if it contains no subgraph that is isomorphic to $F$. 
The extremal numbers $\operatorname{ex}(n,F)$ and $\operatorname{ex}(a,b,F)$ denote, respectively, the maximum number of edges in an $n$-vertex $F$-free graph and in an $F$-free bipartite graph with parts of  sizes $a$ and $b$. As usual,  $(x)_j$ denotes the falling factorial $x(x-1)\cdots (x-j+1)$. 

The circumference of a graph $G$ is the length of a longest cycle in $G$ and the girth of $G$ is the length of a shortest cycle in $G.$ A graph is called weakly pancyclic if it contains cycles of all lengths between its girth and circumference.

We first record two endpoint results.

\begin{lemma}[Sheehan~\cite{Sheehan}]\label{lem:SheehanErdos}
Let $G$ be a hamiltonian graph of order $n$ satisfying \eqref{eq1}. Then $G$ contains at least two Hamilton cycles.
\end{lemma}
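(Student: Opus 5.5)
We argue by contradiction: fix a Hamilton cycle $H=v_0v_1\cdots v_{n-1}v_0$ of $G$ and assume $H$ is the only Hamilton cycle. Since $e(H)=n$, the set $F=E(G)\setminus E(H)$ of chords satisfies $|F|>\floor{n^2/4}+1-n$. The plan is to show that many chords must exist, and that some pair of them, together with $H$, already produces a second Hamilton cycle. The basic tool is the classical crossing-chord switch. If $v_iv_j$ and $v_{i+1}v_{j+1}$ are both chords (indices modulo $n$), then
\[
v_iv_jv_{j-1}\cdots v_{i+1}v_{j+1}v_{j+2}\cdots v_i
\]
is a Hamilton cycle different from $H$. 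So uniqueness of $H$ forces the following: for every chord $v_iv_j$, the pair $v_{i+1}v_{j+1}$ is not an edge, and likewise the pair $v_{i-1}v_{j-1}$ is not an edge.

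The first step is to encode this as a forbidden-configuration problem. Consider the map $\sigma: v_iv_j\mapsto v_{i+1}v_{j+1}$, which acts on the non-edges of $H$ and shifts both ends one step along the cycle. By the observation above, $\sigma$ sends $F$ into the complement of $G$, so $F\cap \sigma(F)=\emptyset$. This immediately gives $2|F|\le \binom n2-n$, or $|F|\le \frac{n(n-3)}{4}$. That is only about $n^2/4-3n/4$, which is not strong enough on its own: we need $|F|\le \floor{n^2/4}+1-n$ to reach a contradiction. So I will look for further switches of the same type.

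The second step supplies them. Take a chord $v_iv_j$ and suppose $v_{i+1}v_{j+1}\notin E(G)$. Then look at longer exchange patterns. With three chords $v_iv_j$, $v_{i+1}v_k$ and $v_{j+1}v_{k+1}$, placed in suitable cyclic order, rerouting $H$ along them again gives a Hamilton cycle. Another tool is the Pósa-type rotation from the endpoints of the Hamilton path $H-v_0v_1$: if $\deg v_0+\deg v_1\ge n+1$, a standard Ore/Bondy closure argument applied to this path yields a second Hamilton cycle.

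The third step is a degree split. Since the average degree exceeds $n/2+2/n$, either some consecutive pair $v_i,v_{i+1}$ has degree sum at least $n+1$, or the degree sums around $H$ are all at most $n$. In the first case, rotating the Hamilton path $v_{i+1}v_{i+2}\cdots v_i$ from its ends produces, by counting neighbours (the Bondy--Chvátal pigeonhole: $v_{i+1}$ adjacent to $v_{t+1}$ and $v_i$ adjacent to $v_t$), a second Hamilton cycle. One must check that the resulting cycle is not $H$ itself. This holds because the degree sum exceeds $n$ by at least one, so at least two indices $t$ work, and at most one of them recovers $H$. In the second case, summing the degree sums over all $n$ consecutive pairs gives $2\cdot 2e(G)\le n^2$, so $e(G)\le n^2/4$, contradicting \eqref{eq1}.

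The main obstacle is the first case: making sure the pigeonhole genuinely yields a cycle distinct from $H$, and handling parity and small-$n$ boundary effects where $\floor{n^2/4}+1$ is tight. There I would count the indices $t$ with $v_{i+1}v_{t+1},v_iv_t\in E(G)$ carefully, excluding $t=i$ and the trivial index, and use the extra $+1$ in \eqref{eq1} to guarantee a second valid $t$. If that count falls short by one, I would fall back on the crossing-chord and three-chord switches of the first two steps.
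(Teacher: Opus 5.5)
The paper does not prove this lemma; it quotes Sheehan's theorem, which is equivalent to saying that a graph with a unique Hamilton cycle has at most $\floor{n^2/4}+1$ edges. Your argument has a genuine gap, and it lies in the step that carries all the weight: the first case of your third step, which is the same Ore/P\'osa claim as in your second step. For the path $v_{i+1}v_{i+2}\cdots v_i$, the $n-1$ admissible indices are $t\in\{i+1,i+2,\dots,i+n-1\}$ (mod $n$). \emph{Two} of them, $t=i+1$ and $t=i-1$, always satisfy $v_{i+1}v_{t+1},v_iv_t\in E(G)$, because all the edges involved lie on $H$. The first returns $H$ and the second returns $H$ traversed backwards. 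So ``at most one of them recovers $H$'' is false, and a consecutive degree sum of $n+1$ does not give a second Hamilton cycle. Concretely, $v_0v_1v_2v_3v_4v_0$ plus the chords $v_0v_2,v_0v_3$ has $7=\floor{25/4}+1$ edges and $\deg v_0+\deg v_1=6=n+1$. Yet its Hamilton cycle is unique, since $v_1$ and $v_4$ have degree $2$. Moreover, because $4(\floor{n^2/4}+1)>n^2$, your first case would make every hamiltonian graph with $\floor{n^2/4}+1$ edges have two Hamilton cycles, which this example refutes.

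Once the two trivial indices are discarded, the pigeonhole runs over only $n-3$ positions and needs $\deg v_i+\deg v_{i+1}\ge n+2$. Moreover, the nontrivial indices are exactly the chords $v_iv_t$ for which $v_{i+1}v_{t+1}$ is also a chord, i.e.\ your crossing-chord switch. So the corrected third step is just your first step localized at one edge of $H$. Averaging then only yields $4e(G)\le n(n+1)$, i.e.\ the same bound $e(G)\le n+\frac{n(n-3)}{4}=\frac{n(n+1)}{4}$ that you already noted is too weak; for $n\ge 7$ it does not contradict \eqref{eq1}. Your three-chord switches are never specified or counted, and the closing ``fallback'' is a hope rather than an argument. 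What is missing is a genuinely global idea that beats the two-chord constraint by roughly $n/4$ edges, for instance an induction on $n$ that shrinks $G$ while keeping its Hamilton cycle unique. As written, the proposal does not prove the lemma.
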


\begin{lemma}[Erd\H{o}s~\cite{Er55}]\label{lem:Erdos}
Let $G$ be a graph of order $n$ satisfying \eqref{eq1}. Then $c_3(G) \ge n-1$. 
\end{lemma}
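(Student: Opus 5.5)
The statement is Lemma~\ref{lem:Erdos}, and it follows from Erd\H{o}s's triangle-counting theorem quoted in the introduction. That theorem says that a graph of order $n$ with $e(G)=\floor{\frac{n^2}{4}}+k$, $k\in\{1,2,3\}$, contains at least $k\floor{\frac n2}$ triangles. Condition \eqref{eq1} gives $e(G)\ge \floor{\frac{n^2}{4}}+2$. My plan is to pass to a spanning subgraph $G'\subseteq G$ obtained by deleting edges of $G$ until exactly $\floor{\frac{n^2}{4}}+2$ edges remain. Every triangle of $G'$ is a triangle of $G$, so $c_3(G)\ge c_3(G')$.

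Applying the Erd\H{o}s bound with $k=2$ to $G'$ gives $c_3(G')\ge 2\floor{\frac n2}$. It remains to compare this with $n-1$. For $n$ even, $2\floor{\frac n2}=n\ge n-1$. For $n$ odd, $2\floor{\frac n2}=n-1$. In either case $c_3(G)\ge n-1$, which is exactly the Hilton bound $n-\ell+2$ at $\ell=3$. Hamiltonicity is not needed here. The only care is that the Erd\H{o}s theorem is stated for an exact edge count, which is why we pass to $G'$ rather than apply it to $G$ directly.

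If one wanted a self-contained argument instead of citing \cite{Er55}, the main obstacle would be proving the supersaturation bound itself. I would argue by induction on $n$ in steps of two. Take an edge $uv$ lying in a triangle; such an edge exists because $e(G)>\floor{\frac{n^2}{4}}$ forces a triangle by Mantel's theorem. If $d(u)+d(v)\ge n+2$, then $uv$ lies in at least two triangles, and we then handle removing $u,v$ and applying induction. Otherwise we delete $u,v$, which loses at most about $n+1$ edges, so the remaining graph still exceeds the Mantel threshold on $n-2$ vertices, and we add back the triangles through $uv$. The delicate part is the bookkeeping that yields exactly $\floor{\frac n2}$ new triangles per extra edge, and the small cases. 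Since the result is quoted from the literature, I would simply cite it.
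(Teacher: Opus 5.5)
Your argument is correct and matches the paper, which gives no separate proof and simply uses the $k=2$ case of Erd\H{o}s's theorem quoted in the introduction, so that $c_3(G)\ge 2\floor{\frac n2}\ge n-1$. Your precaution of first deleting edges down to exactly $\floor{\frac{n^2}{4}}+2$ is appropriate. The optional self-contained induction sketch is not needed, since the paper likewise just cites \cite{Er55}.
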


The next lemma is a classical pancyclicity theorem near the same density threshold.

\begin{lemma}[H\"aggkvist, Faudree, Schelp~\cite{HFS}]\label{lem:HFS}
Let $G$ be a hamiltonian graph of order $n$. If $e(G) \ge \floor{\frac{(n-1)^2}{4}} + 2$, then $G$ is pancyclic or bipartite.
\end{lemma}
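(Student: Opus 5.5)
The plan is to argue by induction on $n$, using a fixed Hamilton cycle $C=v_0v_1\cdots v_{n-1}v_0$. The engine of the argument is the classical Bondy-type chord counting. Suppose $G$ contains no cycle of some length $\ell$ with $3\le \ell\le n-1$. Then for every pair of indices $i,j$, the chords $v_iv_j$ and $v_{i+1}v_{j+\ell-1}$ (indices mod $n$, in the appropriate orientation) cannot both be present, since together with two arcs of $C$ they would close an $\ell$-cycle. Summing these exclusions over $i$ pairs off the potential chords. The aim is to show that this gives $d(v_i)+d(v_{i+1})\le n+O(1)$ for most consecutive pairs, and hence $e(G)\le n^2/4+O(1)$, with near-equality only when $G$ is close to $K_{\lceil n/2\rceil,\lfloor n/2\rfloor}$.

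That bound alone is too weak, because the hypothesis $e(G)\ge\floor{\frac{(n-1)^2}{4}}+2$ lies about $n/2$ below $n^2/4$. The main step is therefore a reduction. Call a vertex $v_i$ \emph{removable} if $v_{i-1}v_{i+1}\in E(G)$; then $G-v_i$ is hamiltonian of order $n-1$. If some removable $v_i$ has $d(v_i)\le \floor{\frac{n-1}{2}}$, then
\[
e(G-v_i)\ge \floor{\tfrac{(n-1)^2}{4}}+2-\floor{\tfrac{n-1}{2}}=\floor{\tfrac{(n-2)^2}{4}}+2,
\]
so by induction $G-v_i$ is pancyclic or bipartite. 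If $G-v_i$ is pancyclic, then $G$ has every length $3,\dots,n-1$, and length $n$ comes from $C$, so $G$ is pancyclic. If $G-v_i$ is bipartite with this many edges on $n-1$ vertices, it is a balanced complete bipartite graph minus $O(n)$ edges. In that case, if $G$ is not bipartite, then $v_i$ has a neighbour in each part (the chord $v_{i-1}v_{i+1}$ already forces care here). Near-completeness of $G-v_i$ then lets us route paths of every even length through $G-v_i$ between neighbours of $v_i$ in opposite parts, producing odd cycles of all lengths. Even lengths come from $G-v_i$ directly, and length $n$ comes from $C$. The small cases of $n$ serving as the base are checked directly.

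The remaining case is the main obstacle: every removable vertex has degree at least $\floor{\frac{n-1}{2}}+1$, or there is no removable vertex at all. Here I would return to the chord counting for a missing length $\ell$, but weight the exclusions by the triangle chords $v_{i-1}v_{i+1}$.
\begin{itemize}
\item If no removable vertex exists, $C$ has no chords of length $2$. Those $n$ forbidden pairs, added to the Bondy exclusions, should push $e(G)$ below the threshold unless $G$ is bipartite.
\item If removable vertices exist, each has degree above $n/2$, and we are in the high-degree regime. There I would aim to show directly, in the spirit of Bondy's proof of the $n^2/4$ theorem, that high degree at $v_i$ together with the chord $v_{i-1}v_{i+1}$ creates $\ell$-cycles for every $\ell$.
\end{itemize}
Controlling the additive constants in this case analysis, so that the bound lands exactly at $\floor{\frac{(n-1)^2}{4}}+1$ rather than merely $n^2/4-O(n)$, is where I expect the real difficulty. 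If this proves too delicate, I would instead induct by contracting an edge of $C$ to reach the threshold.
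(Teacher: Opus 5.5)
The paper does not prove this lemma; it imports it from H\"aggkvist--Faudree--Schelp. So your argument has to be self-contained, and as written it leaves the central case open.

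\textbf{What works.} The reduction is sound. The identity $\lfloor (n-1)^2/4\rfloor-\lfloor (n-1)/2\rfloor=\lfloor (n-2)^2/4\rfloor$ is correct, and the pancyclic branch is fine. The bipartite branch also works: there $n=2k+1$ and $G-v_i$ is $K_{k,k}$ minus at most $k-2$ edges. Moreover $G$ is automatically non-bipartite because of the triangle $v_{i-1}v_iv_{i+1}$. One slip: the $v_{i-1}$--$v_{i+1}$ paths you route must have odd length, not even.

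Your second bullet is true and much easier than you suggest. If $v_i$ is removable, then $C'=C-v_i+v_{i-1}v_{i+1}$ is an $(n-1)$-cycle containing $N(v_i)$. Suppose $d(v_i)\ge\lfloor (n-1)/2\rfloor+1>\frac{n-1}{2}$. Then for every $d\in\{1,\dots,n-2\}$, the set $N(v_i)$ and its shift by $d$ along $C'$ have total size exceeding $n-1$, so they intersect. Hence $v_i$ has two neighbours at distance $d$ along $C'$, which gives a cycle of length $d+2$. So a removable vertex of either degree settles the theorem. Since $C$ was arbitrary, what remains is exactly this case: no Hamilton cycle $v_0\cdots v_{n-1}v_0$ of $G$ has a chord of the form $v_{i-1}v_{i+1}$.

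\textbf{The gap.} That remaining case is essentially the whole theorem, and your proposal gives no argument for it. It contains the sharpness example. For $n=2k+1$, take $K_{k,k}$ minus an edge $ab$, plus one new vertex joined exactly to $a$ and $b$. This graph is hamiltonian, non-bipartite and triangle-free, so no Hamilton cycle has such a chord. It has $\lfloor (n-1)^2/4\rfloor+1$ edges and no triangle, so it is not pancyclic.

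Consequently any counting in this case must be exact to within a single edge. Equivalently, you must push the average consecutive degree sum $4e(G)/n$ from about $n$, which is what a Bondy-type pairing yields, down to about $n-2$.

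Your proposed mechanism does not do this. In a Bondy-type pairing, each pair (a potential chord at $v_i$, a potential chord at $v_{i+1}$) contributes at most one edge whether or not one member is missing. So knowing that a length-$2$ chord such as $v_iv_{i+2}$ is absent saves nothing, unless you also show that its partner in the pairing is absent, or that it occupied an unpaired slot. You give no such argument, and the fallback of contracting an edge of $C$ is not developed. The missing piece is a real proof for hamiltonian non-bipartite graphs with no chord $v_{i-1}v_{i+1}$ on any Hamilton cycle, not merely control of additive constants.
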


We shall also use weak pancyclicity in dense non-bipartite graphs. Bollob\'as and Thomason~\cite{BollobasThomason} proved that every graph of order $n$ and size at least $\floor{\frac{n^2}{4}}-n+59$ is weakly pancyclic or bipartite. In the hamiltonian non-bipartite setting needed here, He and Hu obtained the following stronger bound.

\begin{lemma}[He, Hu~\cite{HeHu}]\label{lem:HeHu}
Let $G$ be a hamiltonian non-bipartite graph of order $n$. If $e(G) \ge \floor{\frac{n^2}{4}} - n + 12$, then $G$ is weakly pancyclic.
\end{lemma}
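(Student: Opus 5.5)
The plan is to split into a short-girth step and a ``fill in all lengths'' step, both driven by the edge density. Fix a Hamilton cycle $C=v_1v_2\cdots v_nv_1$; the circumference of $G$ is therefore $n$. It suffices to show two things: the girth $g$ of $G$ is at most $4$, and $G$ contains a $C_\ell$ for every $g\le \ell\le n$.

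\textbf{Step 1 (girth).} If $G$ had no $C_3$ and no $C_4$, then $e(G)\le \operatorname{ex}(n,\{C_3,C_4\})=O(n^{3/2})$. This is below $\floor{\frac{n^2}{4}}-n+12$ except for finitely many small $n$, and those would be checked separately; note that for very small $n$ the hypothesis forces $G$ to be nearly complete. So $g\in\{3,4\}$, and $g=3$ holds automatically once a triangle is found.

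\textbf{Step 2 (stability).} Suppose some length $k$ with $g\le k\le n$ is missing.
\begin{itemize}
\item First use the standard chord-counting argument along $C$: a missing $C_k$ forces, for each $i$, the sets $N(v_i)$ and a shift of $N(v_{i+k-2})$ along $C$ to be essentially disjoint. Hence $d(v_i)+d(v_{i+k-2})\le n+O(1)$.
\item Summing over $i$ only gives $e(G)\le \frac{n^2}{4}+O(n)$, which is not a contradiction by itself.
\item I would instead feed this into a stability argument. A graph with $\frac{n^2}{4}-O(n)$ edges in which every ``$k$-shifted'' degree pair sums to at most about $n$ must be close to a balanced complete bipartite graph.
\end{itemize}
Concretely, I would aim to produce a partition $V=A\cup B$ with $|A|,|B|=\frac n2+O(1)$, with $O(n)$ missing $A$--$B$ edges and $O(1)$ edges inside the parts. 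I would also show that all but $O(1)$ vertices have degree at least $\frac n2-O(1)$ across the partition.

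\textbf{Step 3 (constructing cycles).} Since $G$ is non-bipartite, some edge $xy$ lies inside a part, say $A$.
\begin{itemize}
\item \emph{Even lengths.} Choose $\ell/2$ high-degree vertices on each side and use the near-completeness of $G[A,B]$ greedily to close an even cycle.
\item \emph{Odd lengths.} Route an even path of the appropriate length through the almost-complete bipartite graph between a $B$-neighbour of $x$ and a $B$-neighbour of $y$, then close it with the edge $xy$.
\item \emph{Near-Hamiltonian lengths.} Here the greedy embedding fails because the low-degree vertices must be absorbed. I would use the Hamilton cycle $C$ itself and shortcut it by chords: modify $C$ locally, skipping $1,2,\dots$ high-degree vertices via chords between $A$ and $B$, and use $xy$ to fix the parity.
\end{itemize}

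\textbf{Main obstacle.} I expect the main obstacle to be Step 2 with the precise additive constant. The deficit of about $n$ edges allows up to $O(1)$ vertices of small degree and a few edges inside the parts. The threshold $-n+12$ is exactly what must be shown to leave enough room for all such exceptional vertices to be absorbed into $C$-based cycles of every length. I would first try to prove that only a bounded number of vertices have degree below $\frac n2-10$. After that, the case analysis near $\ell=n$ becomes a finite configuration check around those vertices.
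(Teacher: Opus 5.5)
\textbf{There is a genuine gap: your outline breaks at Step~2, and the steps after it are deferred rather than proved.} Note first that the paper gives no proof of this statement. It quotes it from He and Hu and uses it as a black box, only in Claim~\ref{largeell}. Your text is a plan for reproving that theorem.

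\textbf{The degree-pair inequality is false as stated.} Absence of $C_k$ only tells you that $N(v_i)$ and $N(v_{i+k-2})$ are disjoint \emph{outside} the arc $P=v_iv_{i+1}\cdots v_{i+k-2}$, since a common neighbour off $P$ would close a $k$-cycle. The chords from $v_i$ and $v_{i+k-2}$ into $P$ are not controlled by this argument. So what you actually get is $d(v_i)+d(v_{i+k-2})\le n+k-3$, and summing over $i$ gives only $e(G)\le \frac{n(n+k-3)}{4}$. The usual alternative for long cycles pairs $v_i,v_{i+1}$ through crossing chords that skip $n-k$ vertices; it loses a term of order $n-k$ instead. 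So for lengths with $k$ and $n-k$ both linear in $n$ you get no useful information, not ``$\frac{n^2}{4}+O(n)$''.

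\textbf{Even an $n+O(1)$ bound would not give the stability structure.} Degree-pair bounds together with $e(G)\ge\frac{n^2}{4}-O(n)$ cannot force closeness to $K_{n/2,n/2}$. Take two disjoint copies of $K_{n/2}$ joined by two independent edges. This graph is hamiltonian and has $\frac{n^2}{4}-\frac n2+2\ge\floor{\frac{n^2}{4}}-n+12$ edges for even $n\ge 20$. All its degrees are at most $\frac n2$, so every pair of degrees sums to at most $n$. Yet it is about $\frac{n^2}{8}$ edges away from bipartite. It is pancyclic, so it is no counterexample to the lemma. What it shows is that the near-bipartite structure (balanced parts, $O(1)$ edges inside, $O(1)$ low-degree vertices) would have to be extracted from the missing $C_k$ by a genuinely different argument, and you do not supply one.

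\textbf{The hard part is left undone.} Everything that makes the theorem difficult is deferred to an unspecified ``finite configuration check'': why the additive constant $12$ suffices, how the exceptional vertices are absorbed, and the lengths near $n$. Only Step~1 is actually established. Girth at most $4$ follows from Lemmas~\ref{lem:C4free} and~\ref{lem:A1} for $n\ge 8$, and by hand for $n\le 7$, but this is the easy part. As written this is not a proof; within this paper the appropriate treatment is to cite He and Hu, as the authors do.
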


%For the bipartite part  setting, we use the following even-cycle result. Indeed, Entringer and Schmeichel \cite{EntringerSchmeichel} proved that for 
%$m>3$, every hamiltonian bipartite graph of order 
%$2m$ with $e(G)>\frac{m^2}{2}$ contains all even cycles. To unify the case 
%$m=3,$ we use the following weaker version, which is sufficient for our proof.

\begin{lemma}[Entringer, Schmeichel~\cite{EntringerSchmeichel}]\label{lem:ES}
Let $G$ be a hamiltonian bipartite graph of order $2m$, where $m>3$. If $e(G) > \frac{m^2}{2}$, then $G$ contains a cycle of every even length $4,\dots,2m$.
\end{lemma}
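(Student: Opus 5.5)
The statement just above is Lemma~\ref{lem:ES}, a cited result of Entringer and Schmeichel. The plan below is for recovering it from scratch, by a degree-sum argument along the Hamilton cycle combined with induction on the cycle length.

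\emph{Setup.} Fix a Hamilton cycle $C=v_1v_2\cdots v_{2m}v_1$ of $G$. Since $G$ is bipartite, the odd-indexed and even-indexed vertices form the two color classes, each of size $m$. The number of chords is $e(G)-2m>\frac{m^2}{2}-2m$. I would prove a slightly stronger statement by downward induction on the length: for $2\le k<m$, if $G$ has no cycle of length $2k+2$, then the edge count is at most about $\frac{m^2}{2}$. This contradicts the hypothesis.

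\emph{Counting step.} For a cycle of length $2k+2$, I would look for a pair of chords $v_iv_j$ and $v_{i+1}v_{j'}$, with the indices $j,j'$ chosen so that the cycle built from the two chords and two arcs of $C$ has exactly the required length.

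\begin{itemize}
\item The simplest pattern is a chord $v_iv_{i+2k+1}$ (odd span), which together with the arc $v_i\cdots v_{i+2k+1}$ closes a $(2k+2)$-cycle.
\item More generally, the pairs $(v_iv_j,\ v_{i+1}v_{j+1})$ give cycles of length $j-i+2$ and $2m-(j-i)+2$.
\item The pairs $(v_iv_j,\ v_{i+1}v_{j-1})$ give the crossed cycle $v_i v_j v_{j+1}\cdots v_{i-1}$, with the appropriate lengths.
\end{itemize}

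For each $i$, I would define the sets $A_i=\{t: v_iv_{i+t}\in E\}$, and sum $|A_i|+|A_{i+1}|$. If a forbidden-pair condition holds for every shift, then $|A_i|+|A_{i+1}|\le m+O(1)$. Summing over $i$ gives $2e(G)\le m\cdot m+O(m)$, which is the Bondy-style degree-sum argument.

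\emph{Main obstacle.} The hard part is getting the exact bound $\frac{m^2}{2}$ rather than $\frac{m^2}{2}+O(m)$. This needs careful handling of the small shifts and of the length-$4$ case, where the relevant pairs use the edges of $C$ itself. For the slack I would first try induction on $m$: find a vertex pair whose removal keeps a Hamilton cycle of the smaller graph (by contracting along a chord) while losing at most $m$ edges, which preserves the density condition for $m-1$. The case $m=4$ would be checked by hand.

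In the paper itself no proof is needed here, because Lemma~\ref{lem:ES} is quoted from~\cite{EntringerSchmeichel}.
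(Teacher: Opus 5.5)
\textbf{Verdict: your self-contained sketch has a genuine gap.} The paper gives no proof of this lemma; it quotes Entringer and Schmeichel, so relying on the citation, as you note at the end, is all the paper needs. The gap in your sketch is the step you flag yourself.

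\textbf{The degree-sum step.} The whole content of the lemma is the sharp threshold $\frac{m^2}{2}$. A bound $|A_i|+|A_{i+1}|\le m+O(1)$ would only give $e(G)\le \frac{m^2}{2}+O(m)$, which says nothing about graphs just above the threshold. Even that bound is not justified by the configurations you list.
\begin{itemize}
\item Since $G$ is bipartite, every chord $v_iv_j$ has $j-i$ odd. So the lengths $j-i+2$ and $2m-(j-i)+2$ you attach to the pair $(v_iv_j,v_{i+1}v_{j+1})$ are odd and cannot occur. The only cycles formed by $C$ and both of these chords are the $4$-cycle $v_iv_{i+1}v_{j+1}v_jv_i$ and a Hamilton cycle.
\item For the nested pair $(v_iv_j,v_{i+1}v_{j-1})$, the only such cycle is the $4$-cycle $v_iv_{i+1}v_{j-1}v_jv_i$. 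The cycle $v_iv_jv_{j+1}\cdots v_{i-1}v_i$ that you write uses only one chord.
\item The configuration that actually detects $C_{2k+2}$ is the crossing pair $v_iv_j$, $v_{i+1}v_{j+2k-1}$, which yields $v_iv_jv_{j+1}\cdots v_{j+2k-1}v_{i+1}v_i$.
\end{itemize}
The map $v_j\mapsto v_{j+2k-1}$ sends neighbors of $v_i$ to non-neighbors of $v_{i+1}$ only when the arc $v_j\cdots v_{j+2k-1}$ avoids $v_i$ and $v_{i+1}$. This can fail for up to $k+1$ neighbors of $v_i$. So this pattern alone gives $d(v_i)+d(v_{i+1})\le m+k+1$, and hence only $e(G)\le \frac{m(m+k+1)}{2}$. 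Removing this $O(km)$ excess needs further forbidden configurations and a genuinely sharper count, which the sketch does not supply.

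\textbf{The fallback induction on $m$.} This does not fill the hole either. Deleting a consecutive pair $v_i,v_{i+1}$ loses $d(v_i)+d(v_{i+1})-1$ edges.
\begin{itemize}
\item The edge condition for $m-1$ is actually the easy half. The average loss over consecutive pairs is $\frac{2e(G)}{m}-1$, which is below $e(G)-\frac{(m-1)^2}{2}$ whenever $e(G)>\frac{m^2}{2}$. So some consecutive pair is cheap enough.
\item Your criterion ``at most $m$ edges'' is off by one for odd $m$. If $e(G)=\frac{m^2+1}{2}$, losing $m$ edges leaves exactly $\frac{(m-1)^2}{2}$, which violates the strict hypothesis for $m-1$.
\item The real difficulty is Hamiltonicity. $G-v_i-v_{i+1}$ inherits a Hamilton cycle from $C$ only if the chord $v_{i-1}v_{i+2}$ is present. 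Nothing in your plan produces a pair that is both cheap and carries such a chord, or gives any other Hamilton cycle of the smaller graph.
\end{itemize}
That simultaneous choice is the missing idea. As written, the proposal is an outline whose unresolved step is the theorem itself.
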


%For $m\ge 4$ this follows from the Entringer--Schmeichel theorem that an $n$-vertex hamiltonian bipartite graph with $n>6$ and more than $n^2/8$ edges is bipancyclic; the case $m=3$ is elementary. 
We will also need well-known extremal estimates for forbidding short cycles. A classical result (see K\H{o}v\'ari, S\'os and Tur\'an~\cite{Kovari}, Reiman~\cite{Reiman})  provides a general upper bound for \(\operatorname{ex}(n,C_4)\).
 
\begin{lemma}[K\H{o}v\'ari, S\'os, Tur\'an~\cite{Kovari}, Reiman~\cite{Reiman}]\label{lem:C4free}
If $G$ is a $C_4$-free graph of order $n$, then
$e(G) \le \frac{n}{4}\bigl(1+\sqrt{4n-3}\bigr)$.
\end{lemma}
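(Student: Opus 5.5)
The statement just above is Lemma~\ref{lem:C4free}, the K\H{o}v\'ari--S\'os--Tur\'an/Reiman bound. I would prove it by double counting cherries, that is, paths of length two. Each such path is determined by its middle vertex $v$ together with an unordered pair of neighbours of $v$, so the number of cherries is $\sum_{v}\binom{d(v)}{2}$. Since $G$ is $C_4$-free, two distinct vertices have at most one common neighbour, and hence every unordered pair of vertices is the endpoint pair of at most one cherry. This gives
\[
\sum_{v\in V(G)}\binom{d(v)}{2}\le \binom{n}{2}.
\]

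Next I would write $e=e(G)$, so that $\sum_v d(v)=2e$. Applying convexity (Cauchy--Schwarz, $\sum_v d(v)^2\ge (2e)^2/n$) gives
\[
\frac{1}{2}\Bigl(\frac{4e^2}{n}-2e\Bigr)\le \frac{n(n-1)}{2},
\]
which simplifies to $4e^2-2en-n^2(n-1)\le 0$.

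Finally I would solve this quadratic inequality in $e$. Its positive root is
\[
e=\frac{2n+\sqrt{4n^2+16n^2(n-1)}}{8}=\frac{n}{4}\bigl(1+\sqrt{4n-3}\bigr).
\]
Because the leading coefficient is positive, the inequality forces $e$ to be at most this root, which is exactly the bound in the lemma. None of these steps is a real obstacle. The only point needing care is the correct application of Jensen's inequality to the convex function $\binom{x}{2}$ at real arguments, and the algebra at the end, checking that $4n^2+16n^2(n-1)=4n^2(4n-3)$.
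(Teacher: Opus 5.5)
Your proof is correct: it is Reiman's classical double-counting argument, which the paper does not reproduce and simply cites as Lemma~\ref{lem:C4free}. Your displayed step uses only Cauchy--Schwarz on $\sum_v d(v)^2$ together with the identity $\sum_v d(v)=2e$, so the separate care about Jensen for $\binom{x}{2}$ at real arguments is not actually needed.
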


The $C_5$ case relies on the following quantitative theorem of Erd\H{o}s.

\begin{lemma}[Erd\H{o}s~\cite{ErdosCircuits}]\label{lem:Erdos5}
Every graph on $2m$ vertices with at least $m^2+1$ edges contains at least $m(m-1)(m-2)$ cycles of length $5$.
\end{lemma}

The remaining tools are used to handle the short cycles
$C_6,C_7,C_8,C_9$ and $C_{10}$.

\begin{lemma}[Gy\H{o}ri, Kensell, Tompkins~\cite{GyoriKensellTompkins}]\label{lem:GKT}
Every $C_6$-free graph $G$ contains a bipartite $C_4$-free subgraph with at least $\frac{3e(G)}{8}$ edges.
\end{lemma}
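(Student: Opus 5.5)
The plan is to build the bipartite subgraph in two stages: first a bipartition chosen so that the edges it cuts carry few $4$-cycles, then a local deletion step that removes the surviving $4$-cycles cheaply. Throughout, the structure of $4$-cycles comes from $C_6$-freeness, and the basic observation is this one. If two $4$-cycles $uabv$ and $ucdv$ share exactly the edge $uv$ and have $a,b,c,d$ distinct, then $u\,a\,b\,v\,d\,c\,u$ is a $6$-cycle. Hence in a $C_6$-free graph, any two $4$-cycles that share an edge must share at least two edges or a further vertex.

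\textbf{Step 1: classify the clusters of $4$-cycles.} Put an equivalence relation on $E(G)$ generated by ``lying on a common $4$-cycle'', and call the nontrivial classes \emph{$C_4$-blocks}. Using the observation above repeatedly, and the fact that $K_{3,3}$ contains a $C_6$, I would show that every $C_4$-block is a book $K_{2,t}$ with two centres $x,y$ and $t$ leaves, possibly with the edge $xy$ and possibly with a few extra chords among very few vertices. Examples of the latter are $K_4$ and $K_4$ minus an edge. Every such exceptional configuration has bounded size, so it can be listed explicitly.

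\textbf{Step 2: choose a bipartition and delete locally.} Take a uniformly random bipartition $(A,B)$, or a max-cut-type bipartition refined by local switching. A plain random partition cuts each edge with probability $1/2$. If we then simply kill the $4$-cycles inside each block, a book $K_{2,t}$ can lose almost half of its remaining edges, which gives only about $1/4$ overall. The fix is to note that when the two centres $x,y$ of a book lie on opposite sides, every leaf contributes exactly one cut edge and no $4$-cycle survives. In that case the block keeps $t$ of its $2t$ edges, with no deletions. When $x,y$ lie on the same side, the cut part of the block is a $K_{2,s}$, and we keep a star at $x$ plus one edge at $y$, which is $s+1$ edges. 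So I would bias the partition: whenever possible, place the two centres of each book on opposite sides, and then average over the remaining randomness. The goal is to check, block by block (including the small exceptional blocks and single edges outside all blocks), that the expected number of kept edges is at least $3/8$ of the block's size. Linearity of expectation then gives at least $\frac{3e(G)}{8}$ edges in the final bipartite $C_4$-free subgraph. Deleting edges inside distinct blocks cannot create new $4$-cycles, and every $4$-cycle lies inside one block, so the result is $C_4$-free.

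\textbf{Main obstacle: coordination between blocks.} Different blocks may share vertices, and in particular a vertex can be a centre of several books. The requirement ``centres on opposite sides'' is therefore a system of constraints that may not be simultaneously satisfiable. Here is what I would try first. Form an auxiliary \emph{centre graph} $H$ whose edges are the centre pairs $\{x,y\}$ of books, each weighted by the book size $t$. Take a maximum-weight cut of $H$, which separates at least half the weight, and randomise all other vertices independently. Books whose centres are separated keep ratio $1/2$. Books whose centres are not separated keep an expected $(t/2+1)/(2t)$ or so. The weighted average of these two ratios should exceed $3/8$. Single edges and the small exceptional blocks are cut with probability $1/2$ and checked by hand. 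The delicate part is the interaction of triangles and chords $xy$ with this scheme, and I expect that verifying the constant $3/8$ in those finitely many cases is where the real work lies.
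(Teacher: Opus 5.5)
The paper only quotes this lemma from Gy\H{o}ri, Kensell and Tompkins, so your outline has to stand on its own. As written it does not: the coordination problem you end on is left open, and the remedy you propose breaks the rest of your accounting. Once the centres are fixed by a maximum cut of the centre graph, a vertex that is a centre of some book is no longer placed uniformly and independently. So your claims that single edges and small blocks are cut with probability $1/2$, and that a book with unseparated centres has $s\sim\mathrm{Bin}(t,\frac12)$ opposite leaves, are unjustified wherever those vertices are centres.

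This really fails. Take $k\ge 2$ copies of $K_5$ and join them in a tree pattern by $k-1$ copies of $K_{2,3}$, each having its two centres at distinct vertices of two different copies. At each of the remaining $3k+2$ vertices of the $K_5$'s, attach a new $K_{2,3}$ having that vertex as a centre. Every $2$-connected block of this graph is a $K_5$ or a $K_{2,3}$, so the graph is $C_6$-free. The centre graph is a matching, and one of its maximum cuts makes every $K_5$ monochromatic. You then keep only the $3(4k+1)$ cut edges of the books out of $34k+6$ edges, which is less than $\frac38(34k+6)$.

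The missing observation is that no bias is needed: you computed both cases for a book but never averaged them. With a uniformly random bipartition the centres are separated with probability $\frac12$, and otherwise $s\sim\mathrm{Bin}(t,\frac12)$, so
\[
\mathbb{E}[\text{kept}]=\frac{t}{2}+\frac12\Bigl(\frac{t}{2}+1-2^{-t}\Bigr)=\frac{3t}{4}+\frac12-2^{-t-1}>\frac38\cdot 2t .
\]
The edge $xy$ only helps. Since every vertex is then placed independently, linearity of expectation decouples the blocks and there is nothing to coordinate.

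The constant $3/8$ is dictated by a block your Step 1 never mentions. $K_5$ is $C_6$-free and is a single $C_4$-block. A $1$--$4$ split leaves a star with $4$ edges, and a $2$--$3$ split leaves a $K_{2,3}$ whose largest $C_4$-free subgraph has $4$ edges. So the expectation is $\frac{30}{32}\cdot 4=\frac38\cdot 10$ exactly.

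Step 1 also needs an actual proof and a corrected list. The join of one vertex with a double star (a tree of diameter $3$) is $C_6$-free, since every cycle through the apex has at most five vertices. It is a single $C_4$-block of unbounded size, and it is not a book plus a few chords. Similarly, $K_{2,t}$ plus $xy$ plus one edge between two leaves is $C_6$-free and a single $C_4$-block for every $t$. So the exceptional blocks are not all of bounded size. Keep your framework with the unbiased partition, prove a complete classification of $C_4$-blocks in $C_6$-free graphs, and verify the $3/8$ bound for each type.
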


\begin{lemma}[Wang, Wang~\cite{WangWang}]\label{lem:WangWang}
Let $k,r,n$ be integers with $k\ge 2$, $3\le r\le 2k$, and $n \ge 2(r+2)(r+1)(r+2k)$.
If an $n$-vertex $C_{2k+1}$-free graph $G$ satisfies
$e(G) \ge \floor{\frac{(n-r+1)^2}{4}} + \binom{r}{2}$,
then $G$ contains no odd cycle of length greater than $r$.
\end{lemma}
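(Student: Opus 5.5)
The statement ends with Lemma~\ref{lem:WangWang}, a cited result of Wang and Wang, and the paper does not prove it. So what follows is a sketch of how I would establish such a stability statement for odd cycles. It is not a plan for the main theorem.

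Suppose $G$ is $C_{2k+1}$-free with $e(G)\ge \lfloor (n-r+1)^2/4\rfloor+\binom r2$, and suppose $G$ contains an odd cycle of length greater than $r$. I aim for a contradiction in four steps.

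\emph{Step 1: a short odd cycle exists.} Take a shortest odd cycle $C$ of length $s$. Assume first $s\le 2k-1$; the case $s\ge 2k+3$ is handled at the end. Since $C$ is a shortest odd cycle, every vertex outside $C$ has at most two neighbours on $C$ (when $s\ge5$). The standard Erd\H{o}s--Gallai-type path-extension argument then bounds the edges meeting $C$.

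\emph{Step 2: the remaining graph is nearly bipartite.} Because $G$ is $C_{2k+1}$-free, a density of about $n^2/4$ together with supersaturation/stability (Erd\H{o}s--Simonovits) makes $G$ close to bipartite. Concretely, there is a partition $V=A\cup B$ in which only $O(n)$ edges are inside the parts and all but $O(1)$ vertices have degree about $n/2$ across the partition. The condition $n\ge 2(r+2)(r+1)(r+2k)$ makes these error terms quantitative.

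\emph{Step 3: classify the bad vertices.} A vertex with many neighbours on both sides of the partition, or the two endpoints of an edge inside a part, create closed walks of odd length. Using the near-complete bipartite structure, these walks can be rerouted to paths of any prescribed length, so their odd cycles can be adjusted in length by $2$ at will. A vertex of large degree lying on an odd cycle of length greater than $r$ would therefore yield a $C_{2k+1}$. Hence every odd cycle is confined to a set $S$ of vertices that have low degree toward the bipartite core.

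\emph{Step 4: count edges.} Suppose $|S|=t$ and $G[S]$ carries an odd cycle of length greater than $r$. Each vertex of $S$ has small degree to the core, since otherwise we get a $C_{2k+1}$ by the length-adjusting argument. The edge count is then at most
\[
\lfloor (n-t)^2/4\rfloor+\binom t2+O(t)\cdot(\text{small}).
\]
Optimising over $t$ shows this is less than $\lfloor (n-r+1)^2/4\rfloor+\binom r2$ unless $t\le r$. The extremal graph is $K_r$ glued to a complete bipartite graph on the other $n-r+1$ vertices, sharing one vertex. But when $t\le r$, every odd cycle inside $S$ has length at most $r$, which gives the contradiction.

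The main obstacle is Step 3: proving that an odd cycle longer than $r$ can always be rerouted through the dense bipartite part to produce exactly length $2k+1$. This is where $r\le 2k$ and the polynomial lower bound on $n$ enter. I would try a Pósa-type rotation, using the many common neighbours inside the core.
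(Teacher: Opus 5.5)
The paper only cites this lemma from Wang and Wang and gives no proof, so your sketch has to stand on its own. It has a genuine gap at Step 3: its conclusion cannot come out of a rerouting argument.

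Rerouting through the core only replaces the part of an odd cycle between two core vertices $u,v$ by a core path of prescribed parity. That path has length at least $1$ if $u,v$ lie in opposite parts, and at least $2$ if they lie in the same part. So rerouting can lengthen cycles, but it can never shorten an arc whose interior consists of low-degree vertices. Two examples:
\begin{itemize}
\item In $K_{a,b}$, join two vertices $u,v$ of the same part by a new path of odd length at least $2k+1$.
\item In $K_{a,b}$, delete an edge $uv$ and join its ends by a new path of length $2k$.
\end{itemize}
In both graphs every odd cycle has length at least $2k+3>r$, there is no $C_{2k+1}$, and the odd cycles pass through several vertices of degree about $n/2$. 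So the claims ``a vertex of large degree on an odd cycle longer than $r$ yields a $C_{2k+1}$'' and ``every odd cycle is confined to $S$'' both fail for graphs with exactly the structure produced in Step 2. Such configurations can only be excluded by counting edges. Your Step 4 counts only odd cycles lying inside $G[S]$, so it never reaches them.

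What the length-adjusting argument really yields is this: every odd cycle longer than $r$ either has at least $r$ vertices in $S$, or contains a parity-violating arc between two core vertices $u,v$ whose interior lies in $S$. That interior has at least $2k$ vertices if $u,v$ lie in the same part. It has at least $2k-1$ vertices if they lie in opposite parts, and equality there forces $uv\notin E(G)$.

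When $r=2k$, that last case leaves only $t=r-1$ vertices of $S$. Then your bound $\lfloor (n-t)^2/4\rfloor+\binom t2+t\,d$ is at least $\lfloor (n-r+1)^2/4\rfloor+\binom r2$ whenever $d\ge 1$, with equality at $d=1$, which is the extremal graph itself. So no estimate of the form ``$O(t)\cdot(\text{small})$'' can finish. You need an exact count that uses $C_{2k+1}$-freeness again, for example:
\begin{itemize}
\item the forced non-edge $uv$ in the core;
\item every even $u$--$v$ path through $S$ has length at least $2k$, since a shorter one closes with a core path of length $3,5,\dots$ into a $C_{2k+1}$;
\item further core neighbours of vertices of $S$ either create a $C_{2k+1}$ or force further non-edges in the core.
\end{itemize}

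Two smaller points. First, the qualitative Erd\H{o}s--Simonovits stability theorem gives no explicit dependence, so it cannot explain the polynomial bound $n\ge 2(r+2)(r+1)(r+2k)$ you attribute to Step 2. A direct argument is needed, e.g.\ repeatedly deleting vertices of degree below $(\frac12-\varepsilon)n$; this can happen only $O(r/\varepsilon)$ times because $\operatorname{ex}(n,C_{2k+1})=\lfloor n^2/4\rfloor$ for large $n$. Second, the case of odd girth at least $2k+3$, which you say is handled at the end, is never handled, and Step 1 is not used anywhere.
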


\begin{lemma}[Naor, Verstra\"ete~\cite{NaorVerstraete}]\label{lem:NV}
Let $a,b,k$ be integers with $a\le b$ and $k\ge 2$. Then
\[
  \operatorname{ex}(a,b,C_{2k}) \le
  \begin{cases}
    (2k-3)\bigl((ab)^{\frac{k+1}{2k}}+a+b\bigr), & \text{if $k$ is odd},\\[2mm]
    (2k-3)\bigl(a^{\frac{k+2}{2k}}b^{\frac{1}{2}}+a+b\bigr), & \text{if $k$ is even}.
  \end{cases}
\]
\end{lemma}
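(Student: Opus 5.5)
The plan is a breadth-first expansion argument. The key tool is that in a $C_{2k}$-free bipartite graph, every layer of a search tree grows by a factor governed by the minimum degrees, and the two parity cases of $k$ come from which side of the bipartition the $k$-th layer lands in. Let $H$ be a $C_{2k}$-free bipartite graph with parts $A$, $B$, where $|A|=a\le b=|B|$, and suppose $e(H)>(2k-3)\bigl(X+a+b\bigr)$, with $X$ the claimed main term. First I would remove low-degree vertices. Repeatedly delete any vertex of $A$ whose degree is below a constant fraction of $e/a$, and any vertex of $B$ whose degree is below a constant fraction of $e/b$. The total loss is controlled, so a nonempty subgraph $H'$ survives in which every vertex of $A'$ has degree at least $d_A\approx e/(ca)$ and every vertex of $B'$ has degree at least $d_B\approx e/(cb)$. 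The additive $a+b$ term pays for this cleaning and for the degenerate regime where $e/b$ is below about $2k$.

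Next I would fix a root $v\in A'$ and take BFS layers $L_0=\{v\},L_1,\dots,L_k$ in $H'$. These alternate between $B'$ and $A'$. The core lemma is that for $1\le i<k$, the bipartite graph between $L_i$ and $L_{i+1}$, together with the BFS tree, cannot contain a subgraph of average degree more than about $2k-3$ with the right structure. Otherwise a Bondy--Simonovits/Verstra\"ete theta-graph argument gives two paths back to a common ancestor of suitable lengths, and closing them produces a $C_{2k}$. So the edges from $L_i$ forward number at most $(2k-3)(|L_i|+|L_{i+1}|)$. Combined with the minimum-degree bound, and because edges into $L_{i-1}$ are few since each vertex has few back-neighbours, this forces expansion: $|L_{i+1}|\gtrsim |L_i|\,d_{A}/(2k)$ when $L_i\subseteq A'$, and $|L_{i+1}|\gtrsim |L_i|\,d_B/(2k)$ when $L_i\subseteq B'$.

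Finally I would multiply the growth factors. If $k$ is even then $L_k\subseteq A'$, so
\[
a\ge |L_k|\gtrsim (d_Ad_B)^{k/2}\approx \Bigl(\frac{e^2}{ab}\Bigr)^{k/2},
\]
which gives $e\lesssim a^{\frac{k+2}{2k}}b^{\frac12}$. If $k$ is odd then $L_k\subseteq B'$, so
\[
b\ge |L_k|\gtrsim d_A^{\frac{k+1}{2}}d_B^{\frac{k-1}{2}},
\]
which gives $e^k\lesssim (ab)^{\frac{k+1}{2}}$, that is, $e\lesssim (ab)^{\frac{k+1}{2k}}$. In either case this contradicts the assumed edge count. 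The main obstacle is the expansion lemma with the sharp constant $2k-3$. I would first try Verstra\"ete's theta-graph lemma: a bipartite graph with a BFS-compatible bipartition and average degree more than $2k-3$ contains paths of every length up to $2k$ between the relevant pairs. Tracking the constants through the cleaning step so that exactly $(2k-3)$ multiplies both terms will need care; without that care the same argument yields the bound only up to a larger constant.
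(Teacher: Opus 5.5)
The paper gives no proof of this lemma; it imports it from Naor and Verstra\"ete. The coefficient $2k-3$ is used numerically: in Claim~\ref{ell8}, the values $5$ and $7$ produce the thresholds $97$ and $124$ via Lemma~\ref{lem:A3}.

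Your outline is the right kind of argument: clean to minimum degrees, grow BFS layers from a vertex of the smaller side, bound $e(L_i,L_{i+1})$ by a theta-graph argument, and multiply growth factors, with the parity of $k$ deciding whether $L_k$ lies in $A'$ or $B'$. Your exponent bookkeeping is correct. But as written it gives only $\operatorname{ex}(a,b,C_{2k})\le C_k(X+a+b)$ for some unspecified $C_k$, as you concede. That is a genuine gap, not a matter of care.

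Grant your layer bound, and even grant a growth factor of exactly $d/(2k-3)$ at each step after the first, which is more than that bound gives. The cleaning you describe guarantees at best $d_A\ge e/(2a)$ and $d_B\ge e/(2b)$. For odd $k$ the product then gives
\[
b\ge |L_k|\ge \frac{e^k}{2^k(2k-3)^{k-1}a^{\frac{k+1}{2}}b^{\frac{k-1}{2}}},\qquad\text{i.e.}\qquad e\le 2(2k-3)^{1-\frac1k}(ab)^{\frac{k+1}{2k}}.
\]
Since $2^k>2k-3$, we have $2(2k-3)^{1-1/k}>2k-3$; the even case is identical. So this accounting cannot yield the stated constant. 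You would need a different way of obtaining degree conditions, or a sharper layer lemma, and you supply neither.

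Two further steps fail as written.

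First, it is false that each vertex of $L_i$ has few neighbours in $L_{i-1}$; a vertex may send all its edges backwards. Only the aggregate bound $e(L_{i-1},L_i)\le(2k-3)(|L_{i-1}|+|L_i|)$ is available. Using it requires $|L_{i-1}|\le|L_i|$ inductively and gives $|L_{i+1}|\ge\bigl(\frac{d}{2k-3}-3\bigr)|L_i|$, which is useful only when $d\ge 4(2k-3)$. With $d_B\approx e/(2b)$, the leftover regime is therefore $e\lesssim 8(2k-3)b$. The term $(2k-3)(a+b)$ does not absorb this, because $X$ can be $o(b)$ when $a$ is small. Nor does that term pay for the cleaning, whose cost is a fixed fraction of $e$.

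Second, you state the layer lemma only as ``average degree more than about $2k-3$ with the right structure''. Its precise form is where the constant is decided:
\begin{itemize}
\item take a connected subgraph $F\subseteq G[L_i,L_{i+1}]$ with large minimum degree;
\item let $y$ be the deepest common ancestor of $V(F)\cap L_i$, at height $h\ge 1$;
\item split $V(F)\cap L_i$ into one branch at $y$ versus the rest;
\item find an $A$--$B$ path in $F$ of length exactly $2k-2h$ with both ends in $L_i$.
\end{itemize}
The degree threshold in that lemma fixes the constant. This part must either be proved with its exact threshold, or the whole bound cited, as the paper does.
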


\section{Proof of Theorem~\ref{thm:main440}}\label{sec3}

We prove the following slightly stronger result.

\begin{theorem}\label{thmm}
Let $G$ be a hamiltonian graph of order $n$ satisfying \eqref{eq1}. Then, for every integer  $\ell$ with $3\le \ell\le n$,  $c_\ell(G) \ge n-\ell+2$ if \\
(i) $\ell=3,4,5$ or $\ell\ge 11$ for any $n$, \\
(ii) $\ell=6$, and $n\le 10$ or $n\ge 39$, \\
(iii) $\ell=7$, and $n\le 12$ or $n\ge 360$, \\
(iv) $\ell=8$, and $n\le 14$ or $n\ge 97$,\\
(v) $\ell=9$, and $n\le 16$ or $n\ge 440$,\\
(vi) $\ell=10$, and $n\le 18$ or $n\ge 124$.
%Moreover, any counterexample to Conjecture~\ref{conj:Hilton} is contained in the following finite list of parameter ranges:
%\[
%(6,11\le n\le 38),\quad (7,13\le n\le 359),\quad (8,15\le n\le 96),
%\]
%\[
%(9,17\le n\le 439),\quad (10,19\le n\le 123).
%\]
\end{theorem}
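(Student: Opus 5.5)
We argue by contradiction via a deletion reduction, following the commented strategy. The endpoint cases are already available: $\ell=n$ is Lemma~\ref{lem:SheehanErdos} and $\ell=3$ is Lemma~\ref{lem:Erdos}. So fix $4\le \ell\le n-1$ and suppose $c_\ell(G)\le n-\ell+1$. Fix a Hamilton cycle $H$ of $G$. Every $\ell$-cycle with $\ell<n$ must use at least one edge outside $H$. Delete one such edge from each $\ell$-cycle, and call the result $G'$. Then $G'$ is a spanning hamiltonian $C_\ell$-free subgraph of $G$ with
\[
e(G')\ge e(G)-(n-\ell+1)\ge \floor{\frac{n^2}{4}}+2-(n-\ell+1)=\floor{\frac{n^2}{4}}-n+\ell+1 .
\]
For $\ell\ge 11$ this is at least $\floor{n^2/4}-n+12$. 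The rest of the proof shows that such a $G'$ cannot exist, in the appropriate ranges of $n$.

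\textbf{Case 1: $G'$ is bipartite.} Since $G'$ is hamiltonian, $n=2m$ is even and both parts have size $m$. Then $e(G')\le m^2=n^2/4$, so at most $n-\ell+1$ edges of $G$ lie outside $G'$, while $e(G)>m^2+1$ forces at least two edges of $G$ inside the parts. Each such chord, combined with a Hamilton cycle and many cross edges, should produce many odd cycles. To exclude odd $\ell$, count the $\ell$-cycles created by one inner edge $uv$ together with paths of $G'$: a density argument on $G'$ (which has more than $m^2-2m$ edges, hence is nearly complete bipartite) gives $\Omega(m)$ such cycles, more than $n-\ell+1$. For even $\ell$, Lemma~\ref{lem:ES} applies because $e(G')>m^2/2$, so $G'$ contains a $C_\ell$, a contradiction. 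Small $m\le 3$ we handle directly.

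\textbf{Case 2: $G'$ is non-bipartite.} For $\ell\ge 11$, Lemma~\ref{lem:HeHu} makes $G'$ weakly pancyclic. Its circumference is $n$ and its girth is small: a dense graph with about $n^2/4-n$ edges has a triangle or a $C_4$ by Lemma~\ref{lem:C4free}. So $G'$ contains $C_\ell$, a contradiction.

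For $4\le \ell\le 10$, $e(G')$ may fall below the He--Hu threshold, so we instead bound $e(G')$ using extremal estimates.
\begin{itemize}
\item $\ell=4$: Lemma~\ref{lem:C4free} gives $e(G')=O(n^{3/2})$, far too small.
\item $\ell=5$: apply Lemma~\ref{lem:Erdos5} directly to $G$ to obtain many $5$-cycles, after adjusting $n$ odd by adding or deleting a vertex.
\item $\ell=6,8,10$: use Lemma~\ref{lem:GKT} for $\ell=6$, and Lemma~\ref{lem:NV} applied to a max-cut bipartite subgraph (at least half the edges) for $\ell=8,10$. These give $e(G')=O(n^{4/3})$, $O(n^{5/4})$ and $O(n^{6/5})$ respectively, with explicit constants. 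Comparing with $n^2/4-n$ yields the thresholds $39$, $97$ and $124$.
\item $\ell=7,9$: apply Lemma~\ref{lem:WangWang} with $k=3,4$ and a suitable $r$ (for instance $r=3$ or $5$). Then $G'$ has no long odd cycles. But $G'$ is hamiltonian, and a non-bipartite hamiltonian graph with this density has odd cycles of every length up to about $n$ (via HFS-type arguments or a direct chord argument on $H$), a contradiction. The condition $n\ge 2(r+2)(r+1)(r+2k)$ produces thresholds such as $360$ and $440$.
\end{itemize}
The small cases $n\le 2\ell-2$ follow because there $n-\ell+2$ is small. Lemma~\ref{lem:HFS} together with pancyclicity-counting gives enough cycles, for example by deleting edges iteratively while staying above the HFS threshold.

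\textbf{Main obstacle.} The delicate part is the bipartite case with odd $\ell$: counting enough odd $\ell$-cycles through the chords lying inside the parts. A second difficulty is arranging the constants so that the thresholds exactly match those claimed for $\ell=7,9$.
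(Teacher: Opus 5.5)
Your architecture matches the paper's. You delete one non-Hamilton-cycle edge from each $\ell$-cycle to get a hamiltonian $C_\ell$-free $G'$ with $e(G')\ge \floor{n^2/4}-n+\ell+1$. Bipartite $G'$ with even $\ell$ falls to Entringer--Schmeichel. For non-bipartite $G'$ you use He--Hu plus K\H{o}v\'ari--S\'os--Tur\'an ($\ell\ge 11$), Gy\H{o}ri--Kensell--Tompkins ($\ell=6$), max-cut plus Naor--Verstra\"ete ($\ell=8,10$), Wang--Wang with $r=3$ ($\ell=7,9$), and H\"aggkvist--Faudree--Schelp on $G'$ itself when $n\le 2\ell-2$.

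\textbf{The gap: bipartite $G'$ with odd $\ell=2r+1\ge 7$.} You only assert this case, yet it is the step the paper singles out as the crucial one. Every later argument of yours for odd $\ell$ and even $n$ (He--Hu, Wang--Wang) needs $G'$ non-bipartite, so the theorem is unproved there. ``Nearly complete bipartite'' does not give the count by itself:
\begin{itemize}
\item $G'$ may miss up to $2m-2r-2$ edges of $K_{m,m}$, and a single vertex may miss $m-2$ of them, leaving it with degree $2$.
\item Each missing edge at $u$ or $v$ destroys a $1/m$ fraction of the $T=(m)_r(m-2)_{r-1}$ $(u,v)$-paths of length $2r$. So once $m\ge 2r+2$, the union bound can return nothing.
\end{itemize}
The paper uses that union bound only when $m\le 2r$, or when $B$ misses at most $m-1$ edges. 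Otherwise it proceeds structurally:
\begin{itemize}
\item It averages over $\floor{m/2}$-subsets $Y_0\subseteq Y$ to find a $K_{r+1,s}$ core.
\item It shows that any inner edge of $X$ avoiding the (at most one) vertex $x^*$ with at most one neighbour in $Y_0$ already lies in at least $n-\ell+2$ cycles through the core.
\item It shows $x^*$ has no neighbour in $Y_0$.
\item It routes through the two Hamilton-cycle neighbours of $x^*$ to get two $\ell$-cycles per inner edge at $x^*$, and likewise at $y^*$. Using $p\ge m+2$, this gives at least $2p-8\ge 2m-4$ cycles in total.
\end{itemize}
A direct one-edge count might be made to work, but it has to be carried out against exactly this degenerate configuration.

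\textbf{An uncovered finite case: $\ell=4$, $n=7$.} Here $e(G')\ge 10$, while the K\H{o}v\'ari--S\'os--Tur\'an bound is $\frac74(1+\sqrt{25})=10.5$, and $7>2\ell-2$. So neither your $O(n^{3/2})$ argument nor the HFS step applies. The paper checks $n=5,6,7$ by hand: a $7$-cycle takes at most two chords without creating a $C_4$.

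\textbf{Smaller inaccuracies.}
\begin{itemize}
\item For $\ell=6$ the GKT route gives $O(n^{3/2})$, not $O(n^{4/3})$, and that is what produces the threshold $39$.
\item For $\ell=7,9$ only $r=3$ yields $360$ and $440$; $r=5$ would give $924$ and $1092$.
\item The contradiction for $\ell=7,9$ comes from a single odd cycle of length at least $5$, obtained from a chord of the Hamilton cycle. It does not come from ``odd cycles of every length'', which He--Hu does not supply at densities $\floor{n^2/4}-n+8$ or $\floor{n^2/4}-n+10$.
\end{itemize}
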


The cases $\ell = 3,5,n$ follow from known results in the literature, so we take these cases as our starting point.

\begin{lemma}\label{ell}
Theorem~\ref{thmm} holds for $\ell=3,5,n$. 
\end{lemma}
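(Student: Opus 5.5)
The three cases are handled separately, each by quoting one of the preliminary results and checking that its bound is at least $n-\ell+2$.

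\emph{Case $\ell=n$.} The target is $c_n(G)\ge 2$, which is exactly Lemma~\ref{lem:SheehanErdos}.

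\emph{Case $\ell=3$.} The target is $c_3(G)\ge n-1$. This is Lemma~\ref{lem:Erdos}, which does not even need hamiltonicity.

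\emph{Case $\ell=5$.} The target is $c_5(G)\ge n-3$. We may assume $n\ge 5$; for $n=5$ the case coincides with $\ell=n$, so assume $n\ge 6$.

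\emph{Even $n=2m$.} Condition \eqref{eq1} gives $e(G)\ge m^2+2$. Lemma~\ref{lem:Erdos5} then yields at least $m(m-1)(m-2)$ five-cycles. For $m\ge 3$ we have $m(m-1)(m-2)\ge 2m-3$, since $m(m-1)(m-2)\ge 6(m-2)\ge 2m-3$.

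\emph{Odd $n=2m+1$.} Condition \eqref{eq1} gives $e(G)\ge m^2+m+2$. Since $2m(m+1)\le e(G)\cdot(2m+1)$ comfortably, some vertex $v$ has degree at most $2e(G)/n$. We delete such a vertex $v$, chosen of minimum degree. Because the average degree of $G$ is below about $m+1+O(1/m)$, the minimum degree satisfies $\delta\le m+1$, and hence
\[
e(G-v)\ge m^2+m+2-(m+1)=m^2+1 .
\]
Lemma~\ref{lem:Erdos5} applied to $G-v$, which has $2m$ vertices, gives at least $m(m-1)(m-2)$ five-cycles. This is at least $n-3=2m-2$ whenever $m\ge 3$.

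\emph{Remaining small orders.} These are $n=6$ and $n=7$ (i.e.\ $m=3$).
\begin{itemize}
\item For $n=6$ we need $c_5\ge 3$, and the even-case bound $m(m-1)(m-2)=6$ already gives this.
\item For $n=7$ we need $c_5\ge 4$, and the odd-case bound also gives $6$, which suffices.
\end{itemize}
So $m\ge 3$ covers all $n\ge 6$.

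\emph{Remaining check.} The only point needing care is that $\delta(G)\le m+1$ in the odd case. It follows from $2e(G)/n<m+2$, which needs $e(G)<(m+2)(2m+1)/2$. If instead $e(G)$ is larger, we remove as many edges as necessary to reach exactly $m^2+m+2$ edges; this does not increase $c_5$, and Lemma~\ref{lem:Erdos5} does not need hamiltonicity. With exactly $m^2+m+2$ edges, the average degree $2(m^2+m+2)/(2m+1)$ is less than $m+2$, so $\delta\le m+1$ as required.
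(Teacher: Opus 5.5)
Your proof is correct and follows the paper's argument. You quote Sheehan and Erd\H{o}s for $\ell=n$ and $\ell=3$; for $\ell=5$ you apply Lemma~\ref{lem:Erdos5} directly when $n=2m$, and when $n=2m+1$ you pass to a spanning subgraph with exactly $m^2+m+2$ edges, delete a vertex of degree at most $m+1$, and apply the lemma on the remaining $2m$ vertices. Your first pass at the odd case, which bounds the average degree of $G$ itself, is invalid as written because $G$ may have many more edges; the trimming step in your final paragraph is exactly the paper's fix and completes the argument.
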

\begin{proof}
If $\ell=n$ or $\ell=3$, then the result follows from Lemmas~\ref{lem:SheehanErdos}~and~\ref{lem:Erdos} respectively. 

Now let $\ell=5$. The case $n=5$ is the known case $\ell=n$. Assume that $n\ge 6$.
If $n=2m$, then $m\ge 3$ and \eqref{eq1} becomes $e(G)\ge m^2+2$. 
By Lemma~\ref{lem:Erdos5},  $c_5(G)\ge  m(m-1)(m-2)>m-1+m-2=2m-3 = n-3$, as desired.
If $n=2m+1\ge 7$, then $m\ge 3$ and \eqref{eq1} becomes $e(G) \ge m(m+1)+2$. 
Let $H$ be a spanning subgraph of $G$ with exactly $m(m+1)+2$ edges. 
Then there is a vertex $u$ of $H$ whose degree is at most $m+1$. Note that $H-u$ is a graph on $2m$ vertices with at least $m(m+1)+2-(m+1)=m^2+1$ edges. It then follows from Lemma~\ref{lem:Erdos5} again that $H-u$ has at least $m(m-1)(m-2)$ cycles of length $5$, and so  $c_5(G)\ge m(m-1)(m-2)>n-3$, as desired.
\end{proof}

Having established the lemma, we now proceed to the main result.

\begin{proof}[Proof of Theorem~\ref{thmm}]
Let $G$ be a hamiltonian graph of order $n$ satisfying \eqref{eq1}. 
Our proof proceeds by contradiction. Suppose to the contrary that $c_\ell(G) \le n-\ell+1$. By Lemma~\ref{ell}, $\ell\le n-1$ and $\ell=4$ or $\ell\ge 6$. 

Firstly, we establish the following fundamental lemma. Part of the lemma is straightforward, but the crucial and nontrivial part is that $H$ cannot be bipartite.

\begin{lemma}\label{lem:delete}
There is a spanning hamiltonian subgraph $H$ of $G$ such that $H$ is $C_\ell$-free and
\begin{equation}\label{eq:deletebound}
e(H) \ge \floor{\frac{n^2}{4}} - n + \ell + 1.
\end{equation}
Moreover, $H$ cannot be bipartite. 
\end{lemma}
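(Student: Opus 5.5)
Fix a Hamilton cycle $C$ of $G$. Since $\ell\le n-1$, no $\ell$-cycle of $G$ can lie entirely in $E(C)$. So every $\ell$-cycle contains an edge of $E(G)\setminus E(C)$. For each of the at most $n-\ell+1$ cycles of length $\ell$, pick one such chord and delete it. Let $H$ be the resulting spanning subgraph. Then $C\subseteq H$, so $H$ is hamiltonian, and $H$ is $C_\ell$-free, because every $\ell$-cycle of $G$ has lost an edge and $H$ has no new cycles. Since \eqref{eq1} gives $e(G)\ge \floor{n^2/4}+2$,
\[
e(H)\ge \floor{\tfrac{n^2}{4}}+2-(n-\ell+1)=\floor{\tfrac{n^2}{4}}-n+\ell+1,
\]
which is \eqref{eq:deletebound}. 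This part is routine.

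For the claim that $H$ is not bipartite, suppose it is, with parts $A,B$. Because $H$ contains the Hamilton cycle $C$, $n=2m$ is even and $|A|=|B|=m$. The key observation is that $G$ is obtained from $H$ by adding back at most $n-\ell+1$ edges, while $e(G)\ge m^2+2$. A bipartite graph with parts of size $m$ has at most $m^2$ edges, so at least two of the re-added edges $f_1,f_2$ lie inside $A$ or inside $B$. I would then count $\ell$-cycles of $G$ directly to contradict $c_\ell(G)\le n-\ell+1$.

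If $\ell$ is odd, each $\ell$-cycle of $G$ must use an odd number of non-bipartite edges. I would count $\ell$-cycles that use $f_1$ together with a path of length $\ell-1$ in $H$ between its endpoints. Say $f_1=xy$ with $x,y\in A$. Since $H$ has at least $m^2-2m+\ell+1$ edges, it misses at most $2m-\ell-1$ edges of $K_{m,m}$. So $x$ and $y$ have many common neighbours, and many paths of length $\ell-1$ join them. A greedy count gives far more than $n$ such paths when $\ell\le n-1$. For $\ell$ near $n$ the count is tight, and I would instead use the Hamilton cycle $C$ together with chord rotations to build the paths.

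If $\ell$ is even, the $\ell$-cycles that $G$ retains must use an even number of the deleted edges, possibly zero. Note that $H$ has no $\ell$-cycles at all while being almost complete bipartite. I expect to contradict this directly with Lemma~\ref{lem:ES}: $e(H)\ge m^2-2m+\ell+1>m^2/2$ for $m>3$, so $H$ contains a $C_\ell$. The small case $m\le 3$ I would check by hand. This is in fact the cleanest route for all $\ell$ even, so the odd case is the only real content.

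The main obstacle is the odd case with $\ell$ close to $n$, where the bipartite host is almost full but only a few non-bipartite edges exist. There I would count, for $f_1=xy$, the paths of length $\ell-1$ from $x$ to $y$ in $H$, each giving a distinct $\ell$-cycle. I need at least $n-\ell+2$ of them, and since $n-\ell+2$ is small when $\ell$ is large, a modest count suffices. I would obtain these paths by taking a Hamilton $x$--$y$ path structure inside the near-complete bipartite graph on $A\cup B$ and omitting the appropriate $m-(\ell-1)/2$ pairs of vertices from one side each in different ways. The deficiency of at most $2m-\ell-1$ missing edges is exactly what should keep enough choices, at least $n-\ell+2$, available.
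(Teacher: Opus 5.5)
Your construction of $H$ and your treatment of even $\ell$ (Lemma~\ref{lem:ES} for $m>3$, a direct check for $m=3$) agree with the paper. The gap is the odd case, which is the whole content of the lemma, and which you assert rather than prove.

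Here $\ell=2r+1$ with $r\ge 3$ (odd $\ell\le 5$ was already removed by Lemma~\ref{ell}), and $h=m-r$. You need $2h+1=n-\ell+2$ paths of length $2r$ between the ends $u,v$ of one inside edge, while $H$ may miss up to $2m-\ell-1=2h-2$ edges of $K_{m,m}$.

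Your premise that $u$ and $v$ have many common neighbours does not follow. If $m\ge 2r+2$, the budget permits $d_H(u)=2$ and $d_H(v)=2r$ with disjoint neighbourhoods, and $H$ is still hamiltonian.

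More importantly, consider the obvious way to make ``a greedy count'' rigorous: start from the $T=(m)_r(m-2)_{r-1}$ such paths in $K_{m,m}$ and discard at most $T/m$ per missing edge. This works only while the number $t$ of missing edges is below $m$. But $t$ may be as large as $2m-2r-2$, which is at least $m$ once $n\ge 2\ell+2$. So the hard regime is $\ell$ small relative to $n$ (e.g.\ $\ell=7$, $n$ large), not $\ell$ near $n$. For $\ell$ near $n$ the deficiency $2m-\ell-1$ is tiny (for $\ell=n-1$ one has $H=K_{m,m}$), the count is immediate, and your Hamilton-path-with-omitted-pairs construction is neither needed nor actually specified.

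The paper spends most of its proof on exactly this regime, $t\ge m$ (which forces $m\ge 2r+2$). Write $G=B+I$, where $B$ is the bipartite part of $G$ and $I$ the edges inside the parts $X,Y$. The paper does not rely on a single inside edge; instead it argues structurally:
\begin{itemize}
\item Averaging over $\lfloor m/2\rfloor$-subsets of $Y$ gives $Y_0$ and $X_0$ with $B[X_0,Y_0]\cong K_{r+1,s}$.
\item At most one vertex $x^*\in X$ has fewer than two neighbours in $Y_0$.
\item Every edge of $G[X]$ contains $x^*$, since otherwise the core already supplies $2h+1$ cycles.
\item $N_B(x^*)\cap Y_0=\emptyset$, and symmetrically on the $Y$ side.
\end{itemize}
The inside edges number $p\ge t+2\ge m+2$ and lie in two stars. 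A Hamilton cycle of $B$ then yields two $\ell$-cycles for every inside edge except at most two per star, giving $c_\ell(G)\ge 2p-8>2h+1$.

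Your single-edge strategy may well be salvageable. Fewer than $2m$ missing edges can cripple only about two vertices, and a path of length $2r\ge 6$ has many free interior positions. But to make it a proof you must supply a count that survives up to $2m-2r-2$ missing edges concentrated at $u$, $v$ or their neighbours, valid for every $m\ge r+1$. One way is to first extract a complete bipartite core, as the paper does, and then route paths from $u$ and $v$ into it. As written, that key inequality is missing.
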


\begin{proof}
Let $C$ be a Hamilton cycle of $G$. As $\ell\le n-1$, every $\ell$-cycle  contains at least one edge outside $C$. 
For each $\ell$-cycle choose one of its edges outside $C$. Let $E_1$ be the set of chosen edges, where it is possible  that the same edge is chosen for several cycles. Then $|E_1|\le c_\ell(G)\le n-\ell+1$. Let $H$ be the subgraph of $G$ obtained by removing the edges of $E_1$. Then 
 $H$ is spanning and still contains the Hamilton cycle $C$.

By the construction of $H$, every $\ell$-cycle of $G$ has lost one of its edges, so $H$ is $C_\ell$-free. Moreover, by \eqref{eq1}, we have
\[
e(H) =e(G)-|E_1|\ge e(G) - (n-\ell+1) \ge \floor{\frac{n^2}{4}} + 2 - (n-\ell+1)= \floor{\frac{n^2}{4}} - n + \ell + 1,
\]
proving \eqref{eq:deletebound}.

Next, we show that $H$ cannot be bipartite.
Toward a contradiction, suppose that $H$ is bipartite. As $H$ is a spanning hamiltonian subgraph of $G$, $n=2m$ for some integer $m$.

\begin{claim}\label{lem:bipeven} %lem?
$\ell$ is odd.
\end{claim}
\begin{proof}
Suppose to the contrary that $\ell$ is even. Then $\ell=2r$ for some integer $r\ge 2$, so \[
e(H) \ge \floor{\frac{n^2}{4}} - n + \ell + 1=m^2 - 2m + 2r + 1.
\] 
Note that $r\ge 2$ and $m\ge 3$. If $m=3$, then $e(H)\ge 8$, so apart from a Hamilton cycle, there are at least two chords, and from the bipartiteness, it  can not be $C_4$-free. This shows that $m>3$. Then 
\[
e(H)\ge m^2 - 2m + 2r + 1>\frac{m^2}{2}
\]
%We have
%\[
%m^2 - 2m + 2r + 1-\frac{m(m+1)}{2}=  \frac{m^2 - 5m + 4r + 2}{2}=\frac{(m-4)(m-1)+4r-2}{2}>0,
%\]
%and so $e(H)>\frac{m(m+1)}{2}$, 
contradicting Lemma~\ref{lem:ES}.
\end{proof}

By Claim~\ref{lem:bipeven},  $\ell=2r+1$ for some integer $r$ with  $3\le r\le m-1$. 

%The proof is a counting argument. \label{lem:bipodd}

Let $(X,Y)$ be the bipartition of $H$. As $H$ is hamiltonian and bipartite,  $|X|=|Y|=m$. 
As $H$ is bipartite, all edges of $G$ inside $X$ or inside $Y$ must have been deleted to form $H$.
Then $G=B+I$ for some $B$ with $H\subseteq B\subseteq K_{m,m}$ and $I=E(G[X])\cup E(G[Y])$.
Let 
\[
h=m-r,\, e(B)=m^2-t \mbox{ and }|I|=p.
\]
Note that $I\subseteq E(G)\setminus E(H)$. 
From $G$ to $H$, we have deleted at most $n-\ell+1=2m-(2r+1)+1=2h$ edges. Then $p\le 2h$.  
On the other hand, from \eqref{eq1}, we have 
\[
m^2 - t + p=e(B)+p =e(G)\ge m^2+2, 
\]
and hence $p\ge t+2$. So $t\le p-2\le 2h-2$. 

Let $uv\in I$. Assume that $u,v\in X$. In $K_{m,m}$, a $(u,v)$-path of length $2r$ contains $r$ vertices of $Y$ and $r-1$ vertices of $X\setminus \{u,v\}$, and so  
the number of $(u,v)$-paths of length $2r$ is $$T:= (m)_r\,(m-2)_{r-1}.$$ Among these paths, fix an edge $f \in E(K_{m,m}) \setminus E(B)$ that is incident to $u$ or $v$, and denote the other endpoint of $f$ by $w$. Then $w\in Y$. 
Such a path containing $f$ contains $r-1$ vertices of $Y \setminus \{w\}$ and $r-1$ vertices of $X \setminus \{u,v\}$,  
so there are at most
$$ (m-1)_{r-1}(m-2)_{r-1} = \frac{T}{m} $$
$(u,v)$-paths of length $2r$ containing a fixed edge in $E(K_{m,m}) \setminus E(B)$ that is incident to $u$ or $v$.
Now fix an edge $xy\in E(K_{m,m})\setminus E(B)$ with $x\in X\setminus \{u,v\}$ and $y\in Y$. The edge $xy$ can occupy any one of the $2(r-1)$ internal edge positions of the path. Once this position is fixed, the remaining $r-1$ vertices of $Y$ can be chosen and ordered in at most $(m-1)_{r-1}$ ways, and the remaining $r-2$ vertices of $X\setminus \{u,v,x\}$ can be chosen and ordered in at most $(m-3)_{r-2}$ ways. Hence at most
\[
2(r-1)(m-1)_{r-1}(m-3)_{r-2} =\frac{2(r-1)}{m(m-2)}T
\]
$(u,v)$-paths of length $2r$ contain a fixed edge in $E(K_{m,m})\setminus E(B)$ that is incident to neither $u$ nor $v$.

\noindent
{\bf Case 1.} $m\le 2r$. 

In this case, $\frac{2(r-1)}{m(m-2)}T\ge \frac{T}{m}$. Recall that $e(B)=m^2-t$. Then at most
\[
t\frac{2(r-1)}{m(m-2)}T\le (2h-2)\frac{2(r-1)}{m(m-2)}T= \frac{4(m-r-1)(r-1)}{m(m-2)}T\le \left(1-\frac{2}{m}\right)T 
\]
$(u,v)$-paths of length $2r$ contain an edge in $E(K_{m,m})\setminus E(B)$, where the last inequality follows  $4(m-r-1)(r-1)\le \big((m-r-1)+(r-1)\big)^2=(m-2)^2$.
So at least $\frac{2T}{m}$ $(u,v)$-paths of length $2r$ remain in $B$. Each remaining path together with the edge $uv$ gives a distinct $(2r+1)$-cycle of $G$.
Finally, as $r\ge 3$ and $m\ge 4$, we have
\[
\frac{2T}{m} = 2(m-1)_{r-1}(m-2)_{r-1} \ge 2(m-1)(m-2)^2 > 2m-5 \ge 2h+1,
\]
implying that $c_{\ell}(G)\ge \frac{2T}{m}>2h+1=n-\ell+2$, a contradiction. 

\noindent
{\bf Case 2.} $m\ge 2r+1$. 

In this case, $\frac{2(r-1)}{m(m-2)}\le\frac{1}{m}$. At most $\frac{t}{m}T$ $(u,v)$-paths of length $2r$ contain an edge in $E(K_{m,m})\setminus E(B)$, and so at least $T-\frac{t}{m}T$ $(u,v)$-paths of length $2r$ remain in $B$.
Note that an edge $uv\in I$ together with a $(u,v)$-path of length $2r$ forms a $(2r+1)$-cycle. Now taking the sum over all $uv\in I$, we have $c_{2r+1}(G)\ge p\left(1-\frac{t}{m}\right)T$. 

If $t\le m-1$, then $(t+2)(m-t)-(m+1)=(t+1)(m-t-1)\ge 0$. As $p\ge t+2$ and $T\ge m(m-2)$, we have
\[
c_{2r+1}(G)\ge p\left(1-\frac{t}{m}\right)T \ge (t+2)(m-t)(m-2)\ge (m+1)(m-2) > 2h+1=n-\ell+2,
\]
a contradiction. So $t\ge m$. 
Combining this with $t\le 2h-2$, we have $m\le 2h-2=2m-2r-2$, so $m\ge 2r+2$. 

Let $s = \floor{\frac{m}{2}}$. Let $\binom{Y}{s}$  denote 
the set of all $s$-subsets of $Y$. For $Y'\in \binom{Y}{s}$, let $E(Y')=E_{K_{m,m}}(X,Y')\setminus E(B)$. 
Let $a_X=\min\{|E(Y')|:Y'\in \binom{Y}{s}\}$ and $Y_0\in \binom{Y}{s}$ satisfying $a_X=|E(Y_0)|$. 
For each edge $xy\in E(K_{m,m})\setminus E(B)$ with $y\in Y$, 
\[
|\{Y': xy\in E(Y')\}|=\binom{m-1}{s-1}.
\]
%the edge $xy$ belongs to $E(Y')$ for exactly $\binom{m-1}{s-1}$ sets $Y'\in\mathcal{Y}$. 
Hence $\sum_{Y'\in \binom{Y}{s}}|E(Y')| = t\binom{m-1}{s-1}$, and so 
%Note that $|\mathcal{Y}|=\binom{m}{s}$. Then 
$$a_X\le \frac{t\binom{m-1}{s-1}}{\binom{m}{s}}=\frac{st}{m}\le \frac{t}{2}\le h-1.$$
It follows that  at least $m-a_X\ge m-(h-1)=r+1$ vertices of $X$ are adjacent to all vertices of $Y_0$ in $B$. Let $X_0\subseteq X$ be a subset with $|X_0|=r+1$ containing such vertices. Then $B[X_0,Y_0]\cong K_{r+1,s}$.   

If there are at least two vertices in $X$ having at most one neighbor in $Y_0$ in $B$, then at least $s-1$ vertices of $Y_0$ are not neighbors of these vertices in $B$, so $a_X\ge 2(s-1)$, implying  $h-1\ge 2s-2$. However, as $r\ge 3$, $2s-2\ge m-3> m-r-1=h-1$, a contradiction. So there is at most one vertex in $X$ with at most one neighbor in $Y_0$ in $B$. Denote this vertex by $x^*$ if it exists, and fix $x^*\in X\setminus X_0$ otherwise. Then $x^*\notin X_0$, and every vertex of $X\setminus\{x^*\}$ has at least two neighbors in $Y_0$.

\begin{claim}\label{xstar}
Each edge in $G[X]$ is incident with $x^*$. %$y^*$. 
\end{claim}
\begin{proof}
Suppose to the contrary that $uv\in E(G[X])$ and $u,v\neq x^*$. Then both $u$ and $v$ have at least two neighbors in $Y_0$. 

Suppose first that $r\ge 4$.
We count the number of $(u,v)$-paths of length $2r$ in $G$.  First note that for pairs of neighbors of $u$ and $v$, there are at least $2$ choices. Since $B[X_0,Y_0]\cong K_{r+1,s}$, we only consider the case where the internal vertices of the required paths lie in $X_0\cup Y_0$. The order of vertices in $X_0$ can be chosen in at least $(r-1)!$ ways, and there are   at least $(s-2)_{r-2}$ ways for the choices of vertices in $Y_0$. So the number of such  paths is at least $2(r-1)!(s-2)_{r-2}$. Each such path together with $uv$  gives a $(2r+1)$-cycle. As $m\ge 2r+2$, $s\ge r+1\ge 5$ and so \[
2(r-1)!(s-2)_{r-2}\ge 12(s-2)(s-3)\ge 4s-5\ge 2h+1=n-\ell+2,
\]
implying that $c_{\ell}(G)\ge n-\ell+2$, a contradiction. 

Suppose next that $r=3$. Then $m\ge 8$ and hence $s\ge 4$. We again count $(u,v)$-paths of length $6$ whose internal vertices lie in $X_0\cup Y_0$, each of which together with $uv$ gives a $7$-cycle. Let $\eta=|\{u,v\}\cap X_0|$.

If $\eta=0$, then $u,v\notin X_0$. Since both $u$ and $v$ have at least two neighbors in $Y_0$, there are at least $2$ ordered pairs $(y_1,y_3)$ with $y_1\in N_B(u)\cap Y_0$, $y_3\in N_B(v)\cap Y_0$, and $y_1\neq y_3$. After choosing such $y_1,y_3$, we choose an ordered pair $(x_1,x_2)$ of distinct vertices of $X_0$ in $4\cdot 3=12$ ways, and then choose $y_2\in Y_0\setminus\{y_1,y_3\}$ in $s-2$ ways. Thus there are at least $24(s-2)$ paths of the form $uy_1x_1y_2x_2y_3v$.

If $\eta=1$, say $u\notin X_0$ and $v\in X_0$, then we first choose $y_1\in N_B(u)\cap Y_0$, giving at least $2$ choices. Next we choose an ordered pair $(x_1,x_2)$ of distinct vertices of $X_0\setminus\{v\}$, giving $3\cdot 2=6$ choices, and finally an ordered pair $(y_2,y_3)$ of distinct vertices of $Y_0\setminus\{y_1\}$, giving $(s-1)(s-2)$ choices. Hence there are at least $12(s-1)(s-2)$ paths of the form $uy_1x_1y_2x_2y_3v$. 

If $\eta=2$, then $u,v\in X_0$. We choose an ordered pair $(x_1,x_2)$ of the two vertices in $X_0\setminus\{u,v\}$, which gives $2$ choices, and then an ordered triple $(y_1,y_2,y_3)$ of distinct vertices of $Y_0$, which gives $s(s-1)(s-2)$ choices. Hence there are at least $2s(s-1)(s-2)$ paths of the form $uy_1x_1y_2x_2y_3v$.

Since $m\le 2s+1$, we have $2h+1=2m-5\le 4s-3$. As $s\ge 4$, each of the three lower bounds above is greater than $4s-3$, and hence greater than $2h+1$. Therefore $c_\ell(G)\ge 2h+1=n-\ell+2$, again a contradiction. 
\end{proof}

By symmetry, interchanging the roles of $X$ and $Y$, there is a vertex $y^*\in Y$ such that every vertex of $Y\setminus\{y^*\}$ has at least two neighbors in a corresponding $s$-subset of $X$, say $X_0'$. By the same argument as in the proof of Claim~\ref{xstar}, each edge in $I\cap G[Y]$ is incident with $y^*$.  %If no exceptional vertex exists on that side, choose $y^*$ outside the corresponding complete bipartite core.
Thus $I$ is contained in the union of two stars. 
Let $q_X = |E(G[X])|$ and $q_Y = |E(G[Y])|$.
As $q_X+q_Y=p$, we have $q_X+q_Y\ge t+2\ge m+2$. Note that each star contains at most $m-1$ edges. Then $q_X\ge 3$ and $q_Y\ge 3$. 

\begin{claim}\label{xy0}
$N_B(x^*)\cap Y_0= \emptyset$, and $N_B(y^*)\cap X_0'=\emptyset$.
\end{claim}
\begin{proof}
We prove the first statement, and the proof of the second is the same. 
Suppose to the contrary that $y_0\in N_B(x^*)\cap Y_0$. For each $u\in N_G(x^*)\cap X$, choose $y_u\in N_B(u)\cap Y_0$ with $y_u\ne y_0$. We count the number of $(y_u,y_0)$-paths of length $2r-2$ in which all internal vertices are contained in $(X_0\cup Y_0)\setminus \{u\}$. There are at least $r!$ choices for the vertices from $X_0\setminus \{u\}$ and $(s-2)_{r-2}$ choices for the vertices from $Y_0$. Hence there are at least $r!(s-2)_{r-2}$ such paths. Each such path together with the edges $x^*u$, $uy_u$ and $y_0x^*$ gives a  $(2r+1)$-cycle. Since $q_X\ge 3$, this gives at least $3r!(s-2)_{r-2}$ cycles of length $2r+1$. Note that \[
3r!(s-2)_{r-2}\ge \begin{cases}
18(s-2)>4s-3\ge 2h+1&\mbox{ if } r=3,\\
72(s-2)(s-3)>4s-5\ge 2h+1&\mbox{ if }r\ge 4.
\end{cases}
\]
Then $c_{\ell}(G)\ge 2h+1=n-\ell+2$, a contradiction. 
\end{proof}

Let $D$ be a Hamilton cycle of $B$. Let $y_L,y_R\in Y$ be the two neighbors of $x^*$ on $D$, and let $x_L,x_R\in X$ be the neighbors on $D$ of $y_L,y_R$, respectively, other than $x^*$. By Claim~\ref{xy0}, $y_L,y_R\notin Y_0$. %and $x_L,x_R\notin X_0'$. 
As $x^*$ is the only possible vertex of $X$ with at most one neighbor in $Y_0$, both $x_L$ and $x_R$ have at least two neighbors in $Y_0$. 
Let $u\in N_G(x^*)\cap (X\setminus \{x_L,x_R\})$. 
Choose distinct vertices $y,z\in Y_0$ such that $y\in N_B(u)$ and $z\in N_B(x_L)$. Let $Q_L$ be a $(y,z)$-path of length $2r-4$ whose internal vertices are contained in $(X_0\cup Y_0)\setminus \{u,x_L\}$. Then $x^*uyQ_Lzx_Ly_Lx^*$ is a cycle of length $2r+1$ passing $x_L$ and $y_L$. Similarly, we obtain a cycle of length $2r+1$ passing $x_R$ and $y_R$.  
Summing up all $u\in N_G(x^*)\cap (X\setminus \{x_L,x_R\})$, we obtain at least  $2(q_X-2)$ cycles of length $2r+1$. 
%(it is possible $x_L,x_R\not\in N_G(x^*)) 
By similar argument as above, we obtain at least $2(q_Y-2)$ cycles of length $2r+1$. Note that these cycles are pairwise distinct. So \[
c_{\ell}(G) \ge 2(q_X-2)+2(q_Y-2)=2p-8\ge 2(m+2)-8 = 2m-4 > 2m-2r+1 = 2h+1,
\]
a contradiction. This completes the proof of the lemma.
\end{proof}

By Lemma~\ref{lem:delete}, let $H$ be a non-bipartite spanning hamiltonian $C_\ell$-free graph satisfying \eqref{eq:deletebound}.

\begin{claim}\label{largeell} 
It is impossible that $\ell\ge 11$. 
\end{claim}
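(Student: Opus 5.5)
We argue by contradiction. Suppose $\ell\ge 11$, so by Lemma~\ref{lem:delete} we have a non-bipartite, hamiltonian, $C_\ell$-free spanning subgraph $H$ of $G$ with
\[
e(H)\ge \floor{\frac{n^2}{4}}-n+\ell+1\ge \floor{\frac{n^2}{4}}-n+12 .
\]
The plan is to reach a contradiction by showing that $H$ must in fact contain a cycle of length $\ell$.

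The first step applies Lemma~\ref{lem:HeHu}. Since $\ell\ge 11$, the edge bound gives exactly the hypothesis $e(H)\ge\floor{\frac{n^2}{4}}-n+12$. Together with hamiltonicity and non-bipartiteness, the lemma tells us that $H$ is weakly pancyclic. The circumference of $H$ is $n$, because $H$ contains the Hamilton cycle $C$. So $H$ contains a cycle of every length between its girth $g$ and $n$.

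The second step bounds the girth. It suffices to show $g\le \ell$; then $C_\ell\subseteq H$, which contradicts $C_\ell$-freeness. In fact we will show $g\le 4$. The cleanest route is to show that $H$ contains a $C_4$. Apply Lemma~\ref{lem:C4free}: a $C_4$-free graph on $n$ vertices has at most $\frac n4(1+\sqrt{4n-3})$ edges. This is below $\floor{\frac{n^2}{4}}-n+12$ except for small $n$. Indeed, $\frac n4(1+\sqrt{4n-3})\approx n^{3/2}/2$, while the right side is about $n^2/4$, so the inequality fails only for roughly $n\lesssim 20$.

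For small $n$ we use $\ell\le n-1$, so $n\ge 12$. We then check the finitely many remaining values of $n$ directly. Alternatively, we can argue that a graph with this many edges has a short cycle. For example, an $n$-vertex graph of girth at least $5$ has at most $\frac n4(1+\sqrt{4n-3})$ edges by the same lemma. So $g\le 4<\ell$ provided
\[
\frac n4\bigl(1+\sqrt{4n-3}\bigr)<\floor{\frac{n^2}{4}}-n+12 .
\]

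The main obstacle is this numerical check for $12\le n\lesssim 20$. The inequality is tight-ish there; for instance, at $n=12$ the left side is about $3(1+6.7)\approx 23.1$ and the right side is $36-12+12=36$, so it holds. We need it uniformly for all $n\ge 12$. Routine verification at the low end, followed by monotonicity of the difference (its derivative is positive for $n\ge 12$), completes the argument. We therefore conclude $g\le 4\le \ell\le n$, and weak pancyclicity yields an $\ell$-cycle in $H$, a contradiction.
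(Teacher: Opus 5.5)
Your proposal is correct and follows the paper's argument: He--Hu (Lemma~\ref{lem:HeHu}) makes $H$ weakly pancyclic, and the K\H{o}v\'ari--S\'os--Tur\'an/Reiman bound forces a $C_4$, hence an $\ell$-cycle. The paper states this contrapositively ($g(H)>\ell$, so $H$ is $C_4$-free), and your numerical inequality for $n\ge 12$ is exactly what Lemma~\ref{lem:A1} supplies. Your aside that the inequality ``fails for roughly $n\lesssim 20$'' is inaccurate, since it already holds for all $n\ge 8$ even with $+5$ in place of $+12$, but this is harmless because you verify it directly for $n\ge 12$.
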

\begin{proof}
Suppose to the contrary that $\ell\ge 11$. Then \eqref{eq:deletebound} gives
$e(H) \ge \floor{\frac{n^2}{4}} - n + 12$.
By Lemma~\ref{lem:HeHu}, $H$ is weakly pancyclic. As $H$ is $C_\ell$-free, weak pancyclicity forces $g(H) > \ell$.
In particular $H$ is $C_4$-free. As $n>\ell\ge 11$, we have $n\ge 12$, and so by Lemma~\ref{lem:A1},
\[
\floor{\frac{n^2}{4}} - n + 12 >  \floor{\frac{n^2}{4}} - n + 5 > \frac{n}{4}\bigl(1+\sqrt{4n-3}\bigr),
\]
contradicting Lemma~\ref{lem:C4free}.
\end{proof}

\begin{claim}\label{ell4}
$\ell\in \{6,7,8,9,10\}$. 
\end{claim}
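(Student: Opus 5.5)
Given what has already been established, it remains to exclude $\ell=4$. At this point we know $\ell\le n-1$ and $\ell\ne 3,5$ (Lemma~\ref{ell}), and $\ell\le 10$ (Claim~\ref{largeell}). So $\ell\in\{4,6,7,8,9,10\}$, and the claim amounts to showing $\ell\neq 4$. The plan is to argue by contradiction, assuming $\ell=4$ and working with the graph $H$ from Lemma~\ref{lem:delete}. This $H$ is a spanning hamiltonian, non-bipartite, $C_4$-free subgraph of $G$ with
\[
e(H)\ge \floor{\frac{n^2}{4}}-n+5 .
\]

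The main step is to compare this lower bound with Lemma~\ref{lem:C4free}, which gives $e(H)\le \frac{n}{4}\bigl(1+\sqrt{4n-3}\bigr)$. The lower bound grows like $n^2/4$, while the upper bound grows like $n^{3/2}/2$, so they are incompatible once $n$ is moderately large. In the proof of Claim~\ref{largeell} the paper already uses (via Lemma~\ref{lem:A1}) the inequality $\floor{\frac{n^2}{4}}-n+5>\frac{n}{4}\bigl(1+\sqrt{4n-3}\bigr)$ for $n\ge 12$. I would invoke that inequality here, or verify it directly: squaring shows it holds from roughly $n\ge 10$ onward, and it can be checked by hand for borderline values. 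Either way, this gives a contradiction for all $n\ge 12$.

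What is left is the small cases $5\le n\le 11$ (we have $n\ge \ell+1=5$). I expect these to be the main obstacle, because the crude Kővári–Sós–Turán/Reiman bound does not beat $\floor{n^2/4}-n+5$ there. For example, at $n=5$ the lower bound is $6$ while the upper bound is about $5.6$, so that case already works. For $6\le n\le 11$, I would use the exact values of $\operatorname{ex}(n,C_4)$, namely $7,9,11,13,16,18$ for $n=6,\dots,11$. These should be compared with $\floor{n^2/4}-n+5=8,10,13,16,20,24$; in every case the lower bound is strictly larger, which gives the contradiction. If I did not want to cite the exact extremal numbers, the fallback would be to use the structure directly. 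Since $H$ is hamiltonian and $C_4$-free, every chord of the Hamilton cycle must avoid creating a $4$-cycle with cycle segments, and at density about $n^2/4$ this is impossible by a direct degree count: some vertex would have degree at least $n/2-1$, and $C_4$-freeness forces its neighbourhood to span at most a matching while the remaining vertices each see at most one of its neighbours.

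The alternative route to the small cases would be to apply Lemma~\ref{lem:HFS} to $G$ itself, but that lemma only gives pancyclicity, not multiplicity. So the extremal-number comparison is the intended route. Having excluded $\ell=4$, we conclude that $\ell\in\{6,7,8,9,10\}$.
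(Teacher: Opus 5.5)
You correctly reduce to excluding $\ell=4$ via the K\H{o}v\'ari--S\'os--Tur\'an/Reiman bound for large $n$, with small $n$ treated separately. Your table for $6\le n\le 11$ is also correct: the known values $\operatorname{ex}(n,C_4)=7,9,11,13,16,18$ are strictly below $8,10,13,16,20,24$. Lemma~\ref{lem:A1} already holds for every $n\ge 8$, so only $n=5,6,7$ need extra input. For $n=6,7$ the paper does not cite exact Tur\'an numbers; it uses a short chord analysis on the Hamilton cycle.

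\textbf{The gap is the case $n=5$.} You dismiss it by saying the Reiman bound is about $5.6$. In fact $\frac54\bigl(1+\sqrt{17}\bigr)\approx 6.40$, which does not beat the lower bound $\floor{\frac{25}{4}}-5+5=6$.

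No edge-counting argument can settle this case. We have $\operatorname{ex}(5,C_4)=6$: two triangles sharing a vertex form a non-bipartite, $C_4$-free graph on $5$ vertices with exactly $6$ edges. So neither the exact extremal number nor your degree-count fallback gives a contradiction here, since neither uses anything beyond $C_4$-freeness and the edge count.

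The missing ingredient is the hamiltonicity of $H$. Let $v_0v_1v_2v_3v_4v_0$ be a Hamilton cycle of $H$. Every chord has the form $v_iv_{i+2}$, and then $v_iv_{i+2}v_{i+3}v_{i+4}v_i$ is a $4$-cycle. Hence $H$ is just the $5$-cycle, so $e(H)=5<6$, which is the required contradiction. This is exactly how the paper closes the case $n=5$.
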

\begin{proof} Suppose to the contrary that this is not true. 
By Claim~\ref{largeell}, $\ell=4$.  Then \eqref{eq:deletebound} gives $
e(H) \ge \floor{\frac{n^2}{4}} - n + 5$. As $H$ is $C_4$-free, we have by Lemma~\ref{lem:C4free} that $e(H)\le \frac{n}{4}\bigl(1+\sqrt{4n-3}\bigr)$. If $n\ge 8$, then by Lemma~\ref{lem:A1}, we have $e(H)\le \frac{n}{4}\bigl(1+\sqrt{4n-3}\bigr)<\floor{\frac{n^2}{4}} - n + 5$, a contradiction. This shows that $n=5,6,7$. 

Let $C:=v_0v_1\dots v_{n-1}v_0$ be a Hamilton cycle of $H$ with  indices read modulo $n$.

If $n=5$, adding one chord to $C$  results in a graph that contains a $4$-cycle.
 Hence $H=C$ and $e(H)=5$, contradicting the fact that $e(H)\ge \floor{\frac{n^2}{4}} - n + 5=6$.

Suppose that $n=6$. A chord joining opposite vertices of $C$ immediately creates two  $4$-cycles, and any two distinct distance-two chords create a $4$-cycle. 
Thus at most one chord is present and $e(H)\le 7$, which contradicts the fact that $e(H)\ge \floor{\frac{n^2}{4}} - n + 5=8$.

Suppose that $n=7$. A chord of cyclic distance three creates a $4$-cycle, so if  there is a chord, it  must have cyclic distance two. 
Write such a chord as $a_i=v_iv_{i+2}$. If there are two chords $a_i$ and $a_{i+1}$, then
$v_iv_{i+1}v_{i+3}v_{i+2}v_i$
is a $4$-cycle. If there are three chords $a_i, a_j, a_k$ with $i<j<k$, $j\ne i+1$, $k\ne j+1$, then, $j-i=k-j=2$ or $\{j-i, k-j\}=\{2,3\}$. So we can assume 
%up to rotation and reversal, they are 
$\{i,j,k\}=\{0,2,4\}$ or $\{0,2,5\}$. In the first case $v_0v_2v_4v_6v_0$ is a $4$-cycle, and in the second case $v_0v_2v_4v_5v_0$ is a $4$-cycle. Hence at most two chords are present and $e(H)\le 9$, contradicting the fact that $e(H)\ge \floor{\frac{n^2}{4}} - n + 5=10$. This proves the claim. 
\end{proof}

By Claim \ref{ell4},  $\ell\in\{6,7,8,9,10\}$. 

\begin{claim}\label{nlarge}
$n\ge 2\ell-1$.
\end{claim}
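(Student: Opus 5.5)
The plan is to rule out $\ell+1\le n\le 2\ell-2$ directly with Lemma~\ref{lem:HFS}, applied to the subgraph $H$ from Lemma~\ref{lem:delete}. Recall what we have. By Lemma~\ref{lem:delete}, $H$ is a spanning hamiltonian, non-bipartite, $C_\ell$-free subgraph of $G$ satisfying \eqref{eq:deletebound}. We also know $\ell\le n-1$ and, by Claim~\ref{ell4}, $\ell\in\{6,\dots,10\}$. Suppose, towards a contradiction, that $n\le 2\ell-2$.

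The key step is to show that this assumption lifts the bound \eqref{eq:deletebound} to the threshold of Lemma~\ref{lem:HFS}. We use the identity $\floor{\frac{n^2}{4}}-\floor{\frac{(n-1)^2}{4}}=\floor{\frac{n}{2}}$, which one checks separately for $n$ even and $n$ odd. With it, the target inequality
\[
\floor{\frac{n^2}{4}}-n+\ell+1\ \ge\ \floor{\frac{(n-1)^2}{4}}+2
\]
becomes $\ell\ge n-\floor{\frac n2}+1=\lceil\frac n2\rceil+1$. The assumption $n\le 2\ell-2$ gives $\lceil\frac n2\rceil\le \ell-1$, which is exactly this condition. Therefore $e(H)\ge\floor{\frac{(n-1)^2}{4}}+2$.

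Lemma~\ref{lem:HFS} now says that the hamiltonian graph $H$ is pancyclic or bipartite. It is not bipartite by Lemma~\ref{lem:delete}, so it is pancyclic. Since $3\le\ell\le n$, $H$ then contains a cycle of length $\ell$, contradicting that $H$ is $C_\ell$-free. Hence $n\ge 2\ell-1$.

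The only delicate point is the floor arithmetic: the parity identity and the rounding $\lceil n/2\rceil\le \ell-1$. Everything else is a direct appeal to earlier results. The argument does not use $\ell\le 10$; it needs only that $H$ is non-bipartite, which is the substantive content of Lemma~\ref{lem:delete}.
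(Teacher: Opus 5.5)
Your proposal is correct and is essentially the paper's proof: assume $n\le 2\ell-2$, show that \eqref{eq:deletebound} reaches $\floor{\frac{(n-1)^2}{4}}+2$, then use Lemma~\ref{lem:HFS} together with the non-bipartiteness of $H$ to force a $C_\ell$, a contradiction. The only difference is cosmetic: you handle the parity through the identity $\floor{\frac{n^2}{4}}-\floor{\frac{(n-1)^2}{4}}=\floor{\frac n2}$, while the paper checks $n=2m$ and $n=2m+1$ separately.
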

\begin{proof}
Suppose to the contrary that $n \le 2\ell-2$. 
Indeed, if $n=2m$, then $\ell\ge m+1$ and if $n=2m+1$, then  $\ell\ge m+2$.
So 
\[
  \floor{\frac{n^2}{4}}-n+\ell+1\ge \floor{\frac{(n-1)^2}{4}}+2.
\]
By \eqref{eq:deletebound}, we have $e(H)\ge \floor{\frac{(n-1)^2}{4}}+2$. Then by Lemma~\ref{lem:HFS}, as $H$ is non-bipartite, $H$ is pancyclic, which contradicts the fact that $H$ is $C_\ell$-free.
\end{proof}

\begin{claim}\label{ell6}
If $\ell=6$, then $11\le n\le 38$. 
\end{claim}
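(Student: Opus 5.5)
The lower bound is immediate from Claim~\ref{nlarge}: with $\ell=6$ we get $n\ge 2\ell-1=11$. The real content is the upper bound $n\le 38$. I will argue by contradiction: assume $n\ge 39$ and derive a contradiction from the density of the $C_6$-free graph $H$ supplied by Lemma~\ref{lem:delete}.

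The plan is to apply Lemma~\ref{lem:GKT} to $H$. Since $H$ is $C_6$-free, it contains a bipartite $C_4$-free subgraph $H'$ with
\[
e(H')\ge \frac{3e(H)}{8}\ge \frac{3}{8}\left(\floor{\frac{n^2}{4}}-n+7\right),
\]
where the second inequality uses \eqref{eq:deletebound} with $\ell=6$. On the other hand, $H'$ is a $C_4$-free graph of order $n$. So Lemma~\ref{lem:C4free}, or more sharply the bipartite Kővári–Sós–Turán bound (equivalently Lemma~\ref{lem:NV} with $k=2$), gives $e(H')\le \frac{n}{4}\bigl(1+\sqrt{4n-3}\bigr)$.

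It then remains to compare
\[
\frac{3}{8}\left(\floor{\frac{n^2}{4}}-n+7\right) \quad\text{with}\quad \frac{n}{4}\bigl(1+\sqrt{4n-3}\bigr).
\]
The left side grows like $\frac{3n^2}{32}$ and the right side like $\frac{n^{3/2}}{2}$, so the inequality fails once $n$ is moderately large. To settle the threshold I will use $\floor{n^2/4}\ge (n^2-1)/4$, which reduces the comparison to
\[
3(n^2-4n+27)>8n\bigl(1+\sqrt{4n-3}\bigr).
\]
I then check this at $n=39$ and verify that the difference is increasing in $n$, for instance by differentiating or by squaring after isolating the radical. At $n=39$ the left side is $3(1521-156+27)=4176$ and the right side is $312(1+\sqrt{153})\approx 4171$, so the inequality just holds. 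This agrees well with the stated cutoff $39$, which makes me fairly confident this is the intended route.

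The main obstacle is exactly this tightness at $n=39$. If rounding or the parity of $n$ causes trouble, I would use the exact floor: for odd $n$, $\floor{n^2/4}=(n^2-1)/4$, and for even $n$ one gains $1/4$. The bound in Lemma~\ref{lem:C4free} is exactly the one stated, so no further losses should occur. For all larger $n$, monotonicity handles the remaining cases. Combining the two bounds on $e(H')$ then gives the contradiction for $n\ge 39$, and hence $11\le n\le 38$.
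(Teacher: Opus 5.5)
Your proposal is correct and follows the paper's proof: Lemma~\ref{lem:GKT} applied to the $C_6$-free graph $H$, combined with \eqref{eq:deletebound} and Lemma~\ref{lem:C4free}, gives $\frac38\bigl(\floor{\frac{n^2}{4}}-n+7\bigr)\le \frac n4\bigl(1+\sqrt{4n-3}\bigr)$, which fails for $n\ge 39$, and Claim~\ref{nlarge} gives $n\ge 11$. The only difference is how you verify the numerical inequality: the paper (Lemma~\ref{lem:A2}) treats even and odd $n$ separately by squaring, while your uniform use of $\floor{\frac{n^2}{4}}\ge\frac{n^2-1}{4}$ plus monotonicity works equally well, since the odd case $n=39$ is the tight one.
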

\begin{proof}
By Lemma~\ref{lem:GKT}, $H$ has a bipartite $C_4$-free subgraph $F$ with $e(F)\ge \frac{3e(H)}{8}$. By \eqref{eq:deletebound} and Lemma~\ref{lem:C4free},
\[
\frac{3}{8}\left(\floor{\frac{n^2}{4}} - n + 7\right) \le e(F) \le \frac{n}{4}\bigl(1+\sqrt{4n-3}\bigr).
\]
By Lemma~\ref{lem:A2}, we have $ n\le 38$. Then the result follows by Claim~\ref{nlarge}. 
\end{proof}

The following observation will be useful in the rest of the proof.

\begin{lemma}\label{lem:oddgt3}
Every hamiltonian non-bipartite graph of order at least $6$ contains an odd cycle of length greater than $3$.
\end{lemma}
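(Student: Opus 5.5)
The plan is to fix a Hamilton cycle and use non-bipartiteness to find a chord that splits it into two odd cycles, at least one of which must be long. Let $G$ be a hamiltonian non-bipartite graph of order $n\ge 6$, and let $C=v_0v_1\dots v_{n-1}v_0$ be a Hamilton cycle of $G$, with indices read modulo $n$.

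\textbf{Odd order.} If $n$ is odd, then $C$ itself is an odd cycle of length $n\ge 7>3$, and we are done.

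\textbf{Even order.} Now suppose $n$ is even, so $n\ge 6$. Then $C$ is an even cycle, and the parity classes
\[
X=\{v_i: i \text{ even}\},\qquad Y=\{v_i: i \text{ odd}\}
\]
form a bipartition of $C$. This is well defined modulo $n$ because $n$ is even.

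Since $G$ is not bipartite, $(X,Y)$ is not a bipartition of $G$. Hence $G$ has an edge $v_iv_j$ with both endpoints in the same class. Such an edge is not an edge of $C$, so it is a chord, and $j-i$ is even. Let $d$ be the index difference going one way around $C$. Then $d$ is even, $2\le d\le n-2$, and the difference going the other way is $n-d$, which is also even.

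\textbf{Splitting along the chord.} The chord $v_iv_j$, together with each of the two arcs of $C$ between $v_i$ and $v_j$, gives two cycles of lengths $d+1$ and $n-d+1$. Both lengths are odd. Their sum is $n+2\ge 8$, so one of them has length at least $4$. Being odd, that length is at least $5$, which gives the required odd cycle of length greater than $3$.

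There is no real obstacle in this argument. The one point to check is that in the even case, non-bipartiteness of $G$ forces a chord joining two vertices whose indices have the same parity; this holds because $C$ already determines a bipartition, namely $(X,Y)$.
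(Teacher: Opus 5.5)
Your proposal is correct and follows essentially the same argument as the paper: fix a Hamilton cycle, handle odd $n$ directly, and for even $n$ use the alternating 2-coloring to find a same-color chord that splits $C$ into two odd cycles of lengths $d+1$ and $n-d+1$. The only cosmetic difference is that you finish with the sum bound $n+2\ge 8$, whereas the paper notes that if one length is $3$ the other is $n-1>3$.
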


\begin{proof}
Let $C$ be a Hamilton cycle. If the order $n$ is odd, then $C$ itself is an odd cycle of length $n>3$. If $n$ is even, color the vertices alternately along $C$. Since the graph is non-bipartite, some chord has both endpoints in the same color class. Equivalently, its endpoints have even distance $d$ along $C$, where $2\le d\le \frac{n}{2}$. The chord and the two paths of $C$ between its endpoints form odd cycles of lengths $d+1$ and $n-d+1$. If one of these lengths is $3$, the other is $n-1>3$, otherwise both are greater than $3$.
\end{proof}

\begin{claim}\label{ell7}
If $\ell=7$, then $13\le n\le 359$. If $\ell=9$, then $17\le n\le 439$. 
\end{claim}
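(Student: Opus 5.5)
The lower bounds $n\ge 13$ (for $\ell=7$) and $n\ge 17$ (for $\ell=9$) are exactly Claim~\ref{nlarge}, since $n\ge 2\ell-1$. So only the upper bounds need proof. For these I would apply Lemma~\ref{lem:WangWang} to $H$ with $\ell=2k+1$, that is, $k=3$ for $\ell=7$ and $k=4$ for $\ell=9$. The aim is to show that $H$ has no odd cycle of length greater than some small $r$, and then contradict Lemma~\ref{lem:oddgt3}.

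\textbf{Step 1: choice of $r$.} Take $r=3$. Lemma~\ref{lem:WangWang} then concludes that $H$ has no odd cycle of length greater than $3$. But $H$ is hamiltonian, non-bipartite (Lemma~\ref{lem:delete}) and of order $n\ge 13\ge 6$, so Lemma~\ref{lem:oddgt3} gives an odd cycle of length greater than $3$. This is a contradiction.

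\textbf{Step 2: the hypotheses of Lemma~\ref{lem:WangWang}.} The constraint $3\le r\le 2k$ holds. For the edge condition with $r=3$ we need
\[
e(H)\ge \floor{\frac{(n-2)^2}{4}}+3 .
\]
We have $\floor{\frac{(n-2)^2}{4}}=\floor{\frac{n^2}{4}}-n+1$, so the requirement becomes $e(H)\ge \floor{\frac{n^2}{4}}-n+4$. Since $\ell\ge 7$, this follows from \eqref{eq:deletebound}, which gives $e(H)\ge \floor{\frac{n^2}{4}}-n+\ell+1$. The order condition is
\[
n\ge 2(r+2)(r+1)(r+2k)=40(3+2k).
\]
For $k=3$ this is $n\ge 360$, and for $k=4$ it is $n\ge 440$. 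These match the claimed thresholds exactly, which strongly suggests this is the intended route with $r=3$.

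\textbf{Conclusion.} If $\ell=7$ and $n\ge 360$, or $\ell=9$ and $n\ge 440$, we get a contradiction, so $n\le 359$, respectively $n\le 439$. Combined with Claim~\ref{nlarge}, this gives the claim.

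I expect no real obstacle. The only care needed is the floor identity $\floor{\frac{(n-2)^2}{4}}=\floor{\frac{n^2}{4}}-n+1$, which holds for both parities of $n$ because $(n-2)^2=n^2-4n+4$. One should also check that the definition of $C_{2k+1}$-free used in Lemma~\ref{lem:WangWang} matches "no $\ell$-cycle" here, which it does.
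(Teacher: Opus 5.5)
Your proposal is correct and matches the paper's proof. The lower bounds come from Claim~\ref{nlarge}. For the upper bounds, you apply Lemma~\ref{lem:WangWang} with $r=3$, using $e(H)\ge\floor{\frac{(n-2)^2}{4}}+\ell>\floor{\frac{(n-2)^2}{4}}+3$ and the thresholds $40(3+2k)=360,\,440$, which contradicts Lemma~\ref{lem:oddgt3}.
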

\begin{proof}
The lower bound for $n$ follows by Claim~\ref{nlarge}.
Let $\ell=2k+1$ with $k=3$ or $4$. 

For $\ell=7$, Lemma~\ref{lem:delete} gives 
\[
e(H) \ge \floor{\frac{n^2}{4}} - n + 8 = \floor{\frac{(n-2)^2}{4}} + 7>\floor{\frac{(n-2)^2}{4}} + 3.
\]
If $n \ge 2(3+2)(3+1)(3+2\cdot 3) = 360$, then applying Lemma~\ref{lem:WangWang} with $r = 3$, $H$ contains no odd cycle of length greater than $3$, contradicting Lemma~\ref{lem:oddgt3}. So $n\le 359$.

For $\ell=9$, Lemma~\ref{lem:delete} gives 
\[
e(H) \ge \floor{\frac{n^2}{4}} - n + 10 = \floor{\frac{(n-2)^2}{4}} + 9>\floor{\frac{(n-2)^2}{4}} + 3.
\]
If $  n \ge 2(3+2)(3+1)(3+2\cdot 4)=440$, then applying Lemma~\ref{lem:WangWang} with $r = 3$  yields a contradiction to Lemma~\ref{lem:oddgt3}. So $n\le 439$.
\end{proof}

\begin{claim}\label{ell8}
If $\ell=8$, then $15\le n\le 96$. If $\ell=10$, then $19\le n\le 123$. 
\end{claim}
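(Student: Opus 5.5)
The lower bounds $n\ge 15$ for $\ell=8$ and $n\ge 19$ for $\ell=10$ are immediate from Claim~\ref{nlarge}, since $2\ell-1=15$ and $19$ respectively. For the upper bounds I will mimic the $\ell=6$ argument of Claim~\ref{ell6}, now with Lemma~\ref{lem:NV} as the extremal input. Write $\ell=2k$, with $k=4$ (even) for $\ell=8$ and $k=5$ (odd) for $\ell=10$. The graph $H$ from Lemma~\ref{lem:delete} is $C_{2k}$-free and satisfies
\[
e(H)\ge \floor{\frac{n^2}{4}}-n+2k+1 .
\]
Take a maximum cut of $H$, with parts $A$ and $B'$, $|A|=a\le b=|B'|$ and $a+b=n$. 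The bipartite subgraph $F=H[A,B']$ is $C_{2k}$-free and $e(F)\ge e(H)/2$. Lemma~\ref{lem:NV} then gives $e(F)\le (2k-3)\bigl(\phi_k(a,b)+n\bigr)$, where
\[
\phi_5(a,b)=(ab)^{3/5}, \qquad \phi_4(a,b)=a^{3/4}b^{1/2}.
\]
Since $ab\le n^2/4$ and $a\le b$, we have $\phi_5\le (n^2/4)^{3/5}$. For $k=4$, the function $a^{3/4}(n-a)^{1/2}$ is increasing while $a<3n/5$, so on $a\le n/2$ it is at most $(n/2)^{5/4}$. This leads to the inequalities
\[
\tfrac12\Bigl(\floor{\tfrac{n^2}{4}}-n+9\Bigr)\le 5\bigl((n/2)^{5/4}+n\bigr)\quad(\ell=8),
\]
\[
\tfrac12\Bigl(\floor{\tfrac{n^2}{4}}-n+11\Bigr)\le 7\bigl((n^2/4)^{3/5}+n\bigr)\quad(\ell=10).
\]

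The main obstacle is that these crude bounds may not reach the stated thresholds $96$ and $123$. The left sides are about $n^2/8$. At $n=97$ the right side for $\ell=8$ is roughly $5(48.5^{1.25}+97)\approx 5(128+97)\approx1125$, while $n^2/8\approx1176$, so the inequality fails just barely. For $\ell=10$ at $n=124$ the right side is $7(3844^{0.6}+124)\approx 7(141+124)\approx1855$, against $n^2/8\approx1922$, which again works. So the max-cut halving plus Lemma~\ref{lem:NV} should suffice, with thresholds matching the paper's constants. If the factor $\tfrac12$ were too lossy, I would instead use that a $C_{2k}$-free graph of this density is nearly bipartite, but I expect the halving to be exactly what produces $96$ and $123$.

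The remaining step is to verify the numerical inequality for all $n\ge 97$ (respectively $n\ge124$) and not just at the threshold. I would handle this the way Lemmas~\ref{lem:A1} and~\ref{lem:A2} presumably do: put $\floor{n^2/4}\ge (n^2-1)/4$ and check that the difference between the two sides is positive at the threshold and increasing beyond it, since the left side grows like $n^2$ and the right side like $n^{5/4}$ or $n^{6/5}$. This yields $n\le 96$ for $\ell=8$ and $n\le 123$ for $\ell=10$.
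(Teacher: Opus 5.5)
Your proposal is correct and follows the paper's proof essentially step for step. You take a bipartite subgraph $F$ with $e(F)\ge e(H)/2$ and apply Lemma~\ref{lem:NV} with the part sizes bounded via $(n/2)^{5/4}$ and $(n/2)^{6/5}=(n^2/4)^{3/5}$. You then finish with a positivity-plus-monotonicity check exactly as in Lemma~\ref{lem:A3}; your monotonicity remark for $a^{3/4}(n-a)^{1/2}$ supplies a step the paper leaves implicit. One correction to your sanity check: the left-hand sides are $\frac{n^2}{8}-\frac{n}{2}+O(1)$, not $\frac{n^2}{8}$. So the true margins at $n=97$ and $n=124$ are only about $7$ ($1132$ vs.\ $1125$, and $1865.5$ vs.\ $1858.8$), which is why $96$ and $123$ are exactly the thresholds.
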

\begin{proof}
The lower bound for $n$ follows by Claim~\ref{nlarge}.
Let $\ell=2k$ with $k=4$ or $5$. 
Let $F$ be a bipartite spanning subgraph of $H$ containing at least half of the edges of $H$ \cite{Erd3}.
Then
\[
  e(F) \ge \frac12 e(H) \ge \frac12\left(\floor{\frac{n^2}{4}} - n + 2k + 1\right),
\]
and $F$ is $C_{2k}$-free. Let the part sizes of $F$ be $a$ and $b$ with $a\le b$, so $a+b=n$.

If $\ell=8$, then Lemma~\ref{lem:NV} gives $e(F) \le 5\bigl(a^{\frac{3}{4}}b^{\frac{1}{2}}+a+b\bigr)\le 5\bigl((\frac{n}{2})^{\frac{5}{4}}+n\bigr)$. By Lemma~\ref{lem:A3}, if $n\ge 97$, then $e(F)\le 5\bigl((\frac{n}{2})^{\frac{5}{4}}+n\bigr)<  \frac{1}{2}\left(\floor{\frac{n^2}{4}} - n + 9\right)$, contradicting the lower bound on $e(F)$. Therefore, $n\le 96$.

If $\ell=10$, then Lemma~\ref{lem:NV} gives
$e(F) \le 7\bigl((ab)^{\frac{3}{5}}+a+b\bigr) \le 7\bigl((\frac{n}{2})^{\frac{6}{5}}+n\bigr)$. By Lemma~\ref{lem:A3}, if $n\ge 124$, then $e(F)\le 7\bigl((\frac{n}{2})^{\frac{6}{5}}+n\bigr)< \frac12\left(\floor{\frac{n^2}{4}} - n + 11\right)$, again a contradiction. Therefore, $n\le 123$.
\end{proof}

Now, (i) follows from Claim~\ref{ell4}, (ii) follows from Claim~\ref{ell6}, (iii) and (v) follow from Claim~\ref{ell7}, and (iv) and (vi) follow from  Claim~\ref{ell8}. % proving the theorem. 
\end{proof}

\section{Remarks}

Sheehan \cite{Sheehan} explicitly states that there is a hamiltonian  graph $G$  on $n\ge 5$ vertices with 
\[
e(G) = \left\lfloor \frac{n^2}{4} \right\rfloor + 2
\]
and exactly two Hamilton cycles. Thus, for $\ell=n$, the lower bound  $n-\ell+2 = 2$ is attained, and the conjecture is tight for $\ell=n\ge 5$.
The endpoint \(\ell=3\) is also sharp for infinitely many orders. Let \(n=2m+1\ge 5\), and let \(G\) be obtained from \(K_{m,m+1}\) by adding two edges inside the part of size \(m+1\). Then \(G\) is hamiltonian,
\[
e(G)=m(m+1)+2=\left\lfloor \frac{n^2}{4}\right\rfloor+2,
\]
and every triangle consists of one of the two added edges together with one vertex in the other part. Hence \(c_3(G)=2m=n-1\), attaining the bound for \(\ell=3\).

\appendix

\section{Appendix}

For completeness, we prove the inequalities used in Section~\ref{sec3}.

\begin{lemma}\label{lem:A1}
For every integer $n\ge 8$,
$\floor{\frac{n^2}{4}} - n + 5 > \frac{n}{4}\bigl(1+\sqrt{4n-3}\bigr)$.
\end{lemma}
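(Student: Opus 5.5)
We must prove that for every integer $n\ge 8$,
\[
\floor{\frac{n^2}{4}} - n + 5 > \frac{n}{4}\bigl(1+\sqrt{4n-3}\bigr).
\]
The plan is to remove the floor by the crude bound $\floor{\frac{n^2}{4}}\ge \frac{n^2-1}{4}$. It then suffices to show
\[
n^2-1-4n+20 > n+n\sqrt{4n-3},\qquad\text{i.e.}\qquad n^2-5n+19 > n\sqrt{4n-3}.
\]
Both sides are positive for $n\ge 8$, since the discriminant of $n^2-5n+19$ is negative. We may therefore square, and it suffices to prove
\[
(n^2-5n+19)^2 > n^2(4n-3).
\]
Expanding the left side gives $n^4-10n^3+63n^2-190n+361$. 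The inequality becomes
\[
p(n):=n^4-14n^3+66n^2-190n+361>0.
\]

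The main (mild) obstacle is that $p$ is not obviously positive at the small end. I would first check $n=8$ directly:
\[
p(8)=4096-7168+4224-1520+361=-7.
\]
This is negative, so the floor-free route fails at $n=8$. There the original inequality has to be verified with the exact floor. At $n=8$ we have $\floor{64/4}-8+5=13$, while $\frac{8}{4}(1+\sqrt{29})\approx 2\cdot 6.385=12.77$, so the claim holds. For even $n$ the floor is exact, and we gain $\frac14$ over the crude bound. Since the gap is tiny at $n=8$, I would simply check $n=8,9,10,11,12$ by hand with the exact floor. For example, at $n=9$ the left side is $20-9+5=16$ and the right side is $\frac94(1+\sqrt{33})\approx 15.18$.

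For larger $n$ I would show $p(n)>0$ for all $n\ge 12$ or so. Write
\[
p(n)=n^2(n-7)^2+17n^2-190n+361.
\]
Here $17n^2-190n+361$ has roots near $n\approx 2.7$ and $n\approx 8.5$, so it is positive for $n\ge 9$. Hence $p(n)>0$ for all $n\ge 9$, and only $n=8$ needs the separate exact check above.

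In summary, the proof consists of an exact numerical verification at $n=8$ (and optionally a few more small values), followed by the polynomial argument for $n\ge 9$, which uses the decomposition $p(n)=n^2(n-7)^2+(17n^2-190n+361)$ after squaring.
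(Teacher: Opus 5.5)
Your proof is correct, and it takes a somewhat different route from the paper's. The paper splits into $n=2m$ and $n=2m+1$ with $m\ge 4$ and uses the exact value of $\lfloor n^2/4\rfloor$ in each case. Squaring gives the quartics $P(m)=4m^4-28m^3+68m^2-100m+100$ and $Q(m)=16m^4-80m^3+120m^2-192m+224$. The paper then shows each is positive on $[4,\infty)$: the second derivative is positive there, and the first derivative and the function itself are positive at $m=4$.

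You instead use the uniform bound $\lfloor n^2/4\rfloor\ge (n^2-1)/4$. For odd $n$ this is exact, and your $p(n)$ is literally the paper's $Q(m)$ (e.g.\ $p(9)=Q(4)=352$). For even $n$ it discards $\frac14$, which is why $p(8)=-7<0$ and you must treat $n=8$ separately. The paper's exact even case gives $P(4)=20>0$ and needs no exceptional value. In exchange, your positivity argument via $p(n)=n^2(n-7)^2+(17n^2-190n+361)$ is shorter and purely algebraic, and it avoids both the parity split and the derivative bookkeeping.

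Two small corrections. First, the roots of $17n^2-190n+361$ are about $2.43$ and $8.75$, not $2.7$ and $8.5$. Your conclusion is unaffected, but justify it directly: the value at $n=9$ is $28>0$, and the quadratic is increasing for $n\ge 6$. Second, make the $n=8$ check exact rather than decimal: $13>2+2\sqrt{29}$ is equivalent to $11>2\sqrt{29}$, i.e.\ $121>116$. The extra hand checks at $n=9,\dots,12$ are unnecessary once the polynomial argument covers all $n\ge 9$.
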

\begin{proof}
Suppose first that $n$ is even, i.e., $n=2m$ for some $m\ge 4$. The desired inequality is equivalent to $2m^2-5m+10 > m\sqrt{8m-3}$.
Let
\[
P(m)=(2m^2-5m+10)^2 - m^2(8m-3).
\]
Then $P(m)=4m^4-28m^3+68m^2-100m+100$,  $P'(m) = 4(4m^3 - 21m^2 + 34m - 25)$ and $P''(m)=8(6m^2-21m+17)$. As $m\ge 4$, $P''(m)>0$, $P'(m)$ is increasing on  $[4,\infty)$. Note that $P'(4)=124>0$. Then $P'(m)>0$, implying that $P(m)$ is also increasing on $[4, \infty)$. Again, as $P(4)=20>0$, we have $P(m)>0$ for all $m\ge 4$.  So the inequality holds.   

Suppose next that $n$ is odd, i.e., $n=2m+1$ for some $m\ge 4$. The desired inequality is equivalent to $4m^2-6m+15 > (2m+1)\sqrt{8m+1}$.
Let
\[
Q(m)=(4m^2-6m+15)^2 - (2m+1)^2(8m+1).
\]
Then $Q(m)=16m^4-80m^3+120m^2-192m+224$, $Q'(m) = 16(4m^3 - 15m^2 + 15m - 12)$ and $Q''(m) = 48(4m^2 - 10m + 5)$. As $m\ge 4$, $Q''(m)>0$ and so $Q'(m)$ is increasing on $[4, \infty)$. Note that $Q'(4)=1024>0$. Then $Q'(m)>0$, implying that $Q(m)$ is also increasing on $[4,\infty)$. Again, as $Q(4)=352>0$, we have $Q(m)>0$ for all $m\ge 4$.  So the inequality holds.    
\end{proof}

\begin{lemma}\label{lem:A2}
For every integer $n\ge 39$,
$\frac38\left(\floor{\frac{n^2}{4}} - n + 7\right) > \frac{n}{4}\bigl(1+\sqrt{4n-3}\bigr)$.
\end{lemma}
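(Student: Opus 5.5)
We follow the scheme used for Lemma~\ref{lem:A1}: split by the parity of $n$, remove the floor, square away the radical, and show the resulting polynomial is positive for all relevant $m$.

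\textbf{Even case.} Let $n=2m$ with $m\ge 20$. Multiplying the inequality by $8$ gives
\[
3(m^2-2m+7) > 4m\bigl(1+\sqrt{8m-3}\bigr),
\]
which is equivalent to
\[
3m^2-10m+21 > 4m\sqrt{8m-3}.
\]
The left side is positive, so we may square both sides. It then suffices to show that
\[
P(m)=(3m^2-10m+21)^2-16m^2(8m-3)=9m^4-188m^3+274m^2-420m+441
\]
is positive for $m\ge 20$.

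\textbf{Odd case.} Let $n=2m+1$ with $m\ge 19$. Here $\floor{\frac{n^2}{4}}=m^2+m$, so $\floor{\frac{n^2}{4}}-n+7=m^2-m+6$. Multiplying by $8$ gives
\[
3(m^2-m+6) > 2(2m+1)\bigl(1+\sqrt{8m+1}\bigr),
\]
which is equivalent to
\[
3m^2-7m+16 > 2(2m+1)\sqrt{8m+1}.
\]
Again the left side is positive, so after squaring it suffices to show that
\[
Q(m)=(3m^2-7m+16)^2-4(2m+1)^2(8m+1)
\]
is positive for $m\ge 19$. Expanding, $Q(m)=9m^4-170m^3+97m^2-272m+252$.

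\textbf{Positivity of $P$ and $Q$.} We use the derivative argument from Lemma~\ref{lem:A1}.

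1. Check that $P''$ and $Q''$ are positive on the range. Each is a quadratic with leading coefficient $108m^2$, and this term dominates by $m\approx 11$.
2. Conclude that $P'$ and $Q'$ are increasing, and check that $P'(20)>0$ and $Q'(19)>0$.
3. Conclude that $P$ and $Q$ are increasing, and finish by evaluating $P(20)$ and $Q(19)$.

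A rougher route also works. For $m\ge 19$ we have $9m^4\ge 171m^3>188m^3-(\text{small terms})$, and this handles the cubic term directly.

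\textbf{Main obstacle.} The difficulty is tightness at the threshold, since the lemma is sharp at $n=39$ and fails below it. So $Q(19)$ will be a small positive number relative to its terms, and $P(19)$ is presumably near or below $0$, consistent with $n=38$ failing. Both endpoint values must be computed exactly rather than estimated. As a first check, $Q(19)=9\cdot 19^4-170\cdot 19^3+97\cdot 19^2-272\cdot 19+252$, and the leading part is $19^3(171-170)=6859$. The remaining terms are $35017-5168+252$, so $Q(19)>0$. For $P(20)$, the leading part is $20^3(180-188)=-64000$, while $274\cdot 400=109600$ and $-420\cdot 20+441=-7959$, so $P(20)=37641>0$. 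These endpoint values, together with the monotonicity argument above, complete the proof.
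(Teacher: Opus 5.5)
Your approach is the same as the paper's: split by parity, clear the floor, square, and show the quartic is positive via second derivative, first derivative at the endpoint, and value at the endpoint. The even case, including $P(20)=37641$, is correct.

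\textbf{The error is in the odd case.} The $m^2$-coefficient of $(3m^2-7m+16)^2$ is $49+96=145$. The subtracted term expands as $4(2m+1)^2(8m+1)=128m^3+144m^2+48m+4$, whose $m^2$-coefficient is $144$, not $48$. So the correct polynomial is
\[
Q(m)=9m^4-170m^3+m^2-272m+252.
\]
Your polynomial equals the true one plus $96m^2$. Showing yours is positive therefore does not prove the odd case as written. Your endpoint check also overstates the margin, since you used $97\cdot 19^2=35017$; in fact
\[
Q(19)=6859+361-5168+252=2304.
\]
The repair is immediate and the same monotonicity argument works: $Q''(m)=108m^2-1020m+2>0$ for $m\ge 19$, $Q'(19)=62580>0$, and $Q(19)=2304>0$.

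\textbf{The ``rougher route'' is garbled for $P$.} You write $9m^4\ge 171m^3>188m^3-(\text{small terms})$, but $9m^4-188m^3<0$ at $m=20$. Written as $P(m)=m^3(9m-188)+274m^2-420m+441$, you need $m\ge 21$ to get $9m-188\ge 1$, plus a separate check at $m=20$. For $Q$ this route does work cleanly: for $m\ge 19$,
\[
Q(m)=m^3(9m-170)+m^2-272m+252\ge m^3+m^2-272m+252>0.
\]
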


\begin{proof}
Suppose first that $n$ is even, i.e., $n=2m$ for some $m\ge 20$. The desired inequality is equivalent to $3(m^2-2m+7) > 4m\bigl(1+\sqrt{8m-3}\bigr)$, and is also equivalent to $3m^2-10m+21>4m\sqrt{8m-3}$.
Let
\[
P(m)=(3m^2-10m+21)^2 - 16m^2(8m-3).
\]
Then $P(m)=9m^4-188m^3+274m^2-420m+441$, $P'(m) = 4(9m^3 - 141m^2 + 137m - 105)$ and $P''(m) = 4(27m^2 - 282m + 137)$. As $m\ge 20$, $P''(m)>0$, $P'(m)$ is increasing on $[20,\infty)$. Note that $P'(20)=72940>0$. Then $P'(m)>0$, implying that $P(m)$ is also increasing on $[20, \infty)$. Again, as $P(20)=37641>0$, we have $P(m)>0$ for all $m\ge 20$.  So the inequality holds.   

Suppose next that $n$ is odd, i.e., $n=2m+1$ for some $m\ge 19$, and the desired inequality is equivalent to
$3(m^2-m+6) > 2(2m+1)\bigl(1+\sqrt{8m+1}\bigr)$, and is also equivalent to $3m^2-7m+16> 2(2m+1)\sqrt{8m+1}$. 
Let
\[
Q(m)=(3m^2-7m+16)^2 - 4(2m+1)^2(8m+1).
\]
Then $Q(m)=9m^4-170m^3+m^2-272m+252$, $Q'(m) = 36m^3 - 510m^2 + 2m - 272$ and $Q''(m) = 108m^2 - 1020m + 2$. As $m\ge 19$, $Q''(m)>0$, so $Q'(m)$ is increasing on $[19,\infty)$. Note that $Q'(19)=62580>0$. Then $Q'(m)>0$, implying that $Q(m)$ is also increasing on $[19,\infty)$. Again, as $Q(19)=2304>0$, we have $Q(m)>0$ for all $m\ge 19$.  So the inequality holds.    
\end{proof}

\begin{lemma}\label{lem:A3}
(i) For every integer $n\ge 97$,
$\frac12\left(\floor{\frac{n^2}{4}} - n + 9\right) > 5\bigl((\frac{n}{2})^{5/4}+n\bigr)$.\\
(ii) For every integer $n\ge 124$,
$\frac12\left(\floor{\frac{n^2}{4}} - n + 11\right) > 7\bigl((\frac{n}{2})^{6/5}+n\bigr)$.
\end{lemma}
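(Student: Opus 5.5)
The plan is to treat both parts the same way as Lemmas~\ref{lem:A1} and~\ref{lem:A2}, with one extra step: the fractional powers $(n/2)^{5/4}$ and $(n/2)^{6/5}$ are not polynomial, so first substitute a variable that clears them. Put $x=n/2$, so $n=2x$. Then
\[
\floor{\frac{n^2}{4}}\ge \frac{n^2-1}{4}=x^2-\tfrac14,
\]
which covers both parities with a single lower bound. For (i) it therefore suffices to show
\[
x^2-2x+9-\tfrac14 > 10x^{5/4}+20x \quad\text{for all real } x\ge 48.5,
\]
that is, $f(x):=x^2-22x+8.75-10x^{5/4}>0$. For (ii) it suffices to show
\[
g(x):=x^2-30x+10.75-14x^{6/5}>0 \quad\text{for all real } x\ge 62.
\]

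Next, substitute $x=y^4$ in (i) and $x=y^5$ in (ii). This turns $f$ and $g$ into honest polynomials in $y$:
\[
f=y^8-10y^5-22y^4+8.75,\qquad g=y^{10}-14y^6-30y^5+10.75.
\]
Then run the monotonicity argument of Lemma~\ref{lem:A1}. The simplest route is to factor out the leading power. For (i),
\[
f\ge y^4\bigl(y^4-10y-22\bigr),
\]
and the bracket is increasing in $y$ for $y\ge 1$. So it is enough to check that the bracket is positive at $y_0=48.5^{1/4}$. Similarly, for (ii) write
\[
g\ge y^5\bigl(y^5-14y-30\bigr)
\]
and check positivity of the bracket at $y_0=62^{1/5}$.

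The main obstacle is that the thresholds are tight. Since $y_0^4=48.5$ and $y_0\approx 2.639$, the bracket in (i) is about $48.5-26.39-22\approx 0.11$. That is positive, but by a very thin margin. So the endpoint value cannot be estimated loosely; it must be certified. I would do this by verifying a rational lower bound $y_0> 2.63$ and using $y_0<2.64$, which gives $48.5-26.4-22=0.1>0$.

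For (ii), $y_0=62^{1/5}\approx 2.283$, and the bracket is about $62-31.96-30\approx 0.04$. This is even tighter, so I would keep the $+10.75$ term instead of discarding it, and use $y_0<2.2832$ (since $2.2832^5>62$). If the margin still fails to be provable cleanly, the fallback is to avoid the bound $\floor{n^2/4}\ge (n^2-1)/4$ and instead treat the parities separately. In that case I would check $n=124,125$ directly and apply the monotonicity argument from $n=126$ on, where there is more room. This matches how the appendix treats the even and odd cases of Lemmas~\ref{lem:A1} and~\ref{lem:A2}.
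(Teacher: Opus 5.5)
Your proof is correct. It shares the paper's starting point, the bound $\floor{n^2/4}\ge (n^2-1)/4$, but handles the resulting continuous inequality differently.

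The paper works directly in the variable $n$. It checks $f(97)>7$, $f'(97)>10$ and $f''>0$ on $[97,\infty)$, and similarly $g(124)>6$, $g'(124)>13$ and $g''>0$. That route needs numerical values of $48.5^{5/4}$, $48.5^{1/4}$, $62^{6/5}$ and $62^{1/5}$ ``by direct calculation''.

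Your substitutions $x=n/2$ and $x=y^4$ (resp.\ $y^5$), followed by factoring out $y^4$ (resp.\ $y^5$), reduce each part to a single exact comparison. Because $y_0^4=48.5$, the bracket at $y_0$ is exactly $26.5-10y_0$, so you only need an upper bound on $y_0$. Your $2.64^4\approx 48.575>48.5$ works, and even $2.65^4>48.5$ would suffice; the lower bound $y_0>2.63$ plays no role. In (ii) the bracket at $y_0$ is $32-14y_0$, and $(16/7)^5=1048576/16807>62$ already gives $14y_0<32$. So you need neither the $+10.75$ term nor the parity-split fallback, and the margin is not actually a problem.

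What your route buys is easy certification: only integer arithmetic and no second derivatives. The paper's value--slope--convexity template has the advantage of being uniform with Lemmas~\ref{lem:A1} and~\ref{lem:A2}.

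One small slip to fix: $y^4-10y-22$ is not increasing on $[1,\infty)$, since its derivative $4y^3-10$ is negative at $y=1$. It is increasing for $y\ge (5/2)^{1/3}\approx 1.36$. Likewise $y^5-14y-30$ is increasing for $y\ge (14/5)^{1/4}\approx 1.29$. Since $y_0>2$ in both parts, this is all you need.
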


\begin{proof}
Since $\floor{\frac{n^2}{4}}\ge \frac{n^2-1}{4}$, it is enough to prove positivity of the continuous functions
\[
  f(x)=\frac{x^2-1}{8} - \frac{x}{2} + \frac92 - 5\bigl(\bigl(\frac{x}{2}\bigr)^{5/4}+x\bigr)
\]
for $x\ge 97$, and
\[
  g(x)=\frac{x^2-1}{8} - \frac{x}{2} + \frac{11}{2} - 7\bigl(\bigl(\frac{x}{2}\bigr)^{6/5}+x\bigr)
\]
for $x\ge 124$. By a direct calculation, we have 
$f(97)>7$ and $g(124)>6$.
Moreover,
$f'(x)=\frac{x}{4}-\frac{11}{2}-\frac{25}{8}(\frac{x}{2})^{1/4}$ and $f'(97)>10$, while $f''(x)=\frac14 - \frac{25}{64}(\frac{x}{2})^{-3/4} > 0$ for $x\ge 97$.
Hence $f(x)$ is increasing on $[97,\infty)$ and is positive. Similarly,
$g'(x)=\frac{x}{4}-\frac{15}{2}-\frac{21}{5}(\frac{x}{2})^{1/5}$ and $g'(124)>13$, while
$g''(x)=\frac14 - \frac{21}{50}(\frac{x}{2})^{-4/5} > 0$ for $x\ge 124$.
Thus $g(x)$ is increasing on $[124,\infty)$ and is positive. 
\end{proof}

\medskip

\noindent {\bf Acknowledgements.} 
This work was supported by the
National Natural Science Foundation of China (No.~12571364).

\end{document}